\newtheorem{theorem}{Theorem}[section]
\newtheorem{lemma}[theorem]{Lemma}
\newtheorem{corollary}[theorem]{Corollary}
\newtheorem{proposition}[theorem]{Proposition}
\theoremstyle{definition}
\theoremstyle{remark}
\newtheorem{remark}[theorem]{Remark}
\def\heat{\lf(\frac{\p}{\p t}-\Delta\ri)}
\def\Ric{\text{Ric}}
\def\lf{\left}
\def\ri{\right}
\def\p{\partial}
\def\R{\Bbb R}
\def\Sph{\Bbb S}
\def\Rc{\operatorname{{\bf R}}}
\def\id{\operatorname{id}}
\def\Ric{\operatorname{Ric}}
\def\I{\operatorname{I}}
\def\tr{\operatorname{tr}}
\def\tri{\operatorname{tri}}
\def\I{\operatorname{I}}
\numberwithin{equation}{section}
\begin{document}

\title{On a classification of the gradient shrinking solitons}

\author{Lei Ni}
\address{Department of Mathematics, University of California at San Diego, La Jolla, CA 92093}

\email{lni@math.ucsd.edu}
\author{Nolan  Wallach}
\address{Department of Mathematics, University of California at San Diego, La Jolla, CA 92093}

\email{nwallach@math.ucsd.edu}
\thanks{ The first author's research  was
supported in part by NSF grant DMS-0504792  and an Alfred P. Sloan
Fellowship, USA.
The second author's research was partially supported by an NSF Summer Grant.
}

\date{June 2007}

\keywords{}

\begin{abstract} The main purpose of this article is to provide an alternate proof to a result of Perelman on gradient shrinking solitons. In dimension three we also generalize the result by 
removing the $\kappa$-non-collapsing assumption. In high dimension this new method allows us to  prove a classification result on gradient shrinking solitons with vanishing Weyl curvature tensor, which includes the rotationally symmetric ones.
\end{abstract}

\maketitle

\section{Introduction}

In his surgery paper Perelman proved the following statement
\cite{P2}:

\begin{theorem} \label{perelman1} Any $\kappa$-non-collapsed gradient shrinking
 soliton $M^3$ with bounded
positive sectional curvature must be compact.
\end{theorem}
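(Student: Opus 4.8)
\emph{Proof strategy (proposal).} The plan is to argue by contradiction. Suppose $M^3$ is a complete, non-compact, $\kappa$-non-collapsed gradient shrinking soliton with bounded positive sectional curvature; I will show that the metric is forced to be locally conformally flat and that the only complete non-compact models compatible with the shrinking soliton equation are, up to quotient, the round cylinder $\mathbb{S}^2\times\mathbb{R}$ and flat $\mathbb{R}^3$ — all of which carry a $2$-plane of non-positive curvature, contradicting the hypothesis. First I would normalize the soliton equation to $\Ric+\H f=\tfrac12 g$ and record the standard identities $R+\Delta f=\tfrac n2$, $\nabla_i R=2R_{ij}\nabla_j f$ and $|\nabla f|^2+R-f\equiv\mathrm{const}$, together with the known facts that on a complete shrinking soliton $f$ is proper with at most quadratic growth and $\int_M e^{-f}<\infty$. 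Positivity and boundedness of the sectional curvature give $0<R\le C_0$ and, in dimension three, $\Ric>0$ with the full curvature tensor algebraically determined by $\Ric$.

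The core step is a Bochner-type identity on the soliton for the Cotton tensor $C_{ijk}=\nabla_k R_{ij}-\nabla_j R_{ik}-\tfrac{1}{2(n-1)}(\nabla_k R\, g_{ij}-\nabla_j R\, g_{ik})$, which in dimension three — where the Weyl tensor vanishes identically — plays the role that the Weyl tensor plays in the higher-dimensional classification. Commuting covariant derivatives with the help of $\H f=\tfrac12 g-\Ric$, I expect an equation of the schematic form $\Delta_f C=\Rc\ast C$, where $\Delta_f:=\Delta-\langle\nabla f,\nabla\,\cdot\,\rangle$ is the drift Laplacian, in which the zeroth-order curvature coupling, once contracted against $C$, has a favourable sign. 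Pairing this identity with $C$, multiplying by $e^{-f}$, and integrating over $M$ — the integration by parts justified by a cut-off $\phi(f/k)$ with $k\to\infty$, using $\int_M e^{-f}<\infty$ and the curvature bounds to kill the boundary and error terms — then yields $\int_M\bigl(|\nabla C|^2+(\text{non-negative zeroth-order term})\bigr)e^{-f}=0$, hence $C\equiv 0$.

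Once $C\equiv 0$ the manifold $M^3$ is locally conformally flat, so by the classification of locally conformally flat gradient shrinking solitons its universal cover is either a space form or a warped product over an interval with constant-curvature fibres, and $\Ric$ has at most two distinct eigenvalues; enumerating the complete non-compact possibilities leaves exactly $\mathbb{S}^2\times\mathbb{R}$, its quotients, and flat $\mathbb{R}^3$, each of which has a plane of zero sectional curvature, contradicting the positivity hypothesis. Therefore $M^3$ is compact. I note in passing that the $\kappa$-non-collapsing hypothesis is never consumed by this argument — which is precisely why the same method removes it in dimension three — it would be relevant only if one preferred to exclude the cylinder by the collapsing of its balls at large scales. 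The step I expect to be the main obstacle is the derivation and sign analysis of the Cotton-tensor equation on the soliton, i.e. massaging the curvature-coupling term into a manifestly non-negative contraction with $C$ in dimension three; a secondary, more technical point is the rigorous justification of the weighted integration by parts, which requires quantitative control of $f$ and of $\nabla C$ near infinity.
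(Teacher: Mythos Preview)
Your proposal takes a genuinely different route from the paper, and it has a structural gap that undermines it as a self-contained argument.

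The paper does not work with the Cotton tensor at all. Its test quantity is $\frac{|\Ric|^2}{S^2}$; Hamilton's identity (Proposition~\ref{h82-cal1}) gives, on the soliton,
\[
0=\Delta_f\!\left(\frac{|\Ric|^2}{S^2}\right)
-\frac{2}{S^4}\bigl|S\nabla_p R_{ij}-\nabla_p S\,R_{ij}\bigr|^2
-\frac{P}{S^3}
+\Bigl\langle \nabla\!\left(\tfrac{|\Ric|^2}{S^2}\right),\nabla\log S^2\Bigr\rangle,
\]
with $P\ge 0$ an explicit symmetric polynomial in the Ricci eigenvalues. Multiplying by $|\Ric|^2 e^{-f}$ and integrating kills \emph{all three} reaction terms at once: $P=0$ forces $\Ric=\tfrac{S}{3}g$, and $S\nabla_p R_{ij}=\nabla_p S\,R_{ij}$ together with the contracted Bianchi identity forces $\nabla S=0$, hence $\nabla\Ric=0$ and $M$ is a positively curved locally symmetric space. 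No intermediate ``locally conformally flat'' step is taken, and no external classification is invoked.

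Your two-step plan (first $C\equiv 0$, then classify) breaks at the second step. The ``classification of locally conformally flat gradient shrinking solitons'' you invoke is precisely the content this paper is establishing (Theorem~\ref{main1} and the corollaries in Section~5); appealing to it to prove Theorem~\ref{perelman1} is circular in this context, and at the time of writing no independent such classification existed outside Perelman's own level-set argument. Even if $C\equiv 0$ is granted, you still have to run an argument of the paper's type to reach $\Sph^3$, $\Sph^2\times\R$, or $\R^3$---so the Cotton-tensor detour buys nothing.

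The first step is also incomplete: you only \emph{expect} an equation $\Delta_f C=\Rc\ast C$ whose contraction with $C$ is non-negative, and you correctly flag the sign analysis as the main obstacle. In three dimensions there is no obvious Weitzenb\"ock formula for $C$ on a shrinker with a manifestly signed zeroth-order term; the favourable algebra in the paper comes not from $|C|^2$ but from the specific combination $P$ and the quantity $|S\nabla R_{ij}-\nabla S\,R_{ij}|^2$, which is not the Cotton tensor. Unless you can produce the identity and verify the sign, the proposal remains a sketch of a hope rather than a proof.
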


Combining with Hamilton's convergence (or curvature pinching) result
\cite{H82} (see also \cite{Ivey}) one can conclude that $M^3$ must
be isometric to a quotient of $\Sph^3$. The use of such a
result is that it rules out the possible complications caused by the
existence  of noncompact singularity models and implies a
classification of finite time singularities models,  which then
makes surgery procedure possible in the case of dimension three.
More precisely, ancient solutions, which are noncompact in
interesting cases, can be obtained as the Cheeger-Gromov limit of
the sequence of blow-ups, via the compactness result of Hamilton
\cite{H93}, as we approach to the singular time.  The gradient
shrinking solitons arise from the non-collapsed ancient solutions as the blow-down
limits \cite{P1}, at least in the case that the ancient solution has nonnegative
curvature operator.
By ruling out the noncompact shrinking solitons with
positive curvature one can conclude that the shrinking soliton
arisen from the ancient solutions must be cylinder $\Sph^2\times \R$
or its quotient. This provides the phototype for the surgery.
This relation of the gradient shrinking solitons with the Ricci flow
suggests the importance of studying the   noncompact gradient
shrinking solitons.

On the other hand, Perelman's proof, of which one can find a
 detailed exposition in \cite{CZ, KL, MT} (see also  pages 377-386 of \cite{CLN}),
 is geometric and
relies on detailed analysis of the level sets of the potential
function, and more importantly, the Gauss-Bonnet formula for
surfaces. The authors could not adapt Perelman's argument to the high dimensions.
The main goal of this article is to provide an
alternate approach and generalize the above result of Perelman  to
the dimensions greater than 3. Instead of
assuming the uniform bound on curvature,  we
only need very mild growth control on the curvature. Maybe more importantly we do
not  assume that the gradient shrinking soliton is
$\kappa$-non-collapsed, as required by the above mentioned result of
Perelman. For the high dimensional case, our method gives a classification of
 gradient shrinking solitons which are locally conformally flat. The following
 is a consequence of our results.

 \begin{theorem}\label{main1} Let $(M^n, g)$  be a
  gradient shrinking
soliton whose Ricci curvature is nonnegative. If $n\ge 4$ we assume that $(M, g)$ is
 locally conformally flat.
Assume further that
 \begin{equation}\label{eq0}
 |R_{ijkl}|(x)\le \exp(a(r(x)+1))
 \end{equation}
 for some $a>0$, where $r(x)$ is the distance function to a fixed point on the manifold.
 Then its universal cover is either $\R^n$,
$\Sph^n$ or $\Sph^{n-1}\times \R$. In the case that $M$ is compact,
the assumptions that the Ricci curvature is nonnegative  and the
growth condition (\ref{eq0}) are not needed.

In particular,
if $(M^n, g)$ has positive Ricci curvature it must be compact.
 \end{theorem}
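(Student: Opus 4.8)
The plan is to reduce the classification to the pointwise eigenvalue structure of the Ricci tensor and then recognize the metric; the genuinely new ingredient, compared with the Gauss--Bonnet argument behind Theorem~\ref{perelman1}, is an analytic rigidity argument. Normalize the soliton equation as $\Ric+\nabla^2 f=\tfrac12 g$ and record the classical consequences $R+\Delta f=\tfrac n2$, the contracted second Bianchi identity $\nabla_i R=2R_{ij}\nabla_j f$, and $|\nabla f|^2+R-f\equiv\text{const}$. In dimension $3$, and in dimension $n\ge 4$ under the hypothesis $W\equiv0$, the full curvature tensor $R_{ijkl}$ is an explicit algebraic function of $\Ric$ and $g$; when $W\equiv0$ the Schouten tensor is moreover a Codazzi tensor, i.e.\ the Cotton tensor vanishes, $\nabla_iR_{jk}-\nabla_jR_{ik}=\tfrac{1}{2(n-1)}(g_{jk}\nabla_iR-g_{ik}\nabla_jR)$. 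Contracting this against $\nabla_j f$ and using $\nabla R=2\Ric(\nabla f)$ shows that $\nabla f$ is an eigenvector of $\Ric$ wherever $\nabla f\ne0$; in dimension $3$, where the Cotton tensor need not vanish, one instead uses directly that the curvature operator is Ricci-determined, which is all that is needed below.

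On a gradient soliton the Ricci flow is stationary modulo diffeomorphism, so every Hamilton-type evolution equation becomes an \emph{elliptic} equation for the drift Laplacian $\Delta_f:=\Delta-\langle\nabla f,\nabla\,\cdot\,\rangle$. Writing $E:=\Ric-\tfrac Rn g$ for the traceless Ricci tensor, the idea is to compute $\Delta_f|E|^2$ and substitute $W\equiv0$ into the Bochner formula so that the curvature reaction term $R_{ikjl}E_{kl}$ becomes a universal cubic expression in $E$ plus a term linear in $R$; the relevant algebraic inequality $|\tr(E^3)|\le\tfrac{n-2}{\sqrt{n(n-1)}}\,|E|^3$, with equality exactly when $E$ has at most two distinct eigenvalues, one of multiplicity $n-1$, then produces (using $\Ric\ge0$ to keep $R$ and the eigenvalues of $\Ric$ nonnegative) a differential inequality of the shape $\Delta_f\Phi\ge(\text{nonnegative gradient term})$ for a scale-invariant quantity $\Phi$ built from $E$ and $R$, with $\Phi$ constant forcing $\Ric$ to have at most two distinct eigenvalues, one of multiplicity $n-1$. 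Getting these coefficients right so that equality genuinely pins the eigenvalue structure is the first place I expect real computation.

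Since $M$ may be noncompact, the hypotheses $\Ric\ge0$ and (\ref{eq0}) enter when running the maximum principle at infinity: $\Ric\ge0$ together with the soliton equation gives $f$ quadratic growth and makes $e^{-f}$ integrable against the Riemannian measure, while (\ref{eq0}) controls the error terms generated by cutting $\Phi$ off against the weight $e^{-f}$; feeding this into a weighted-integral (or Omori--Yau type) maximum principle for $\Delta_f$ forces $\Phi$ to be constant. Thus $\Ric$ is pointwise one of: $\equiv\tfrac12 g$; $\equiv 0$; or having eigenvalues $\tfrac12$ of multiplicity $n-1$ and $0$ of multiplicity $1$ — the intermediate product profiles being excluded because a Riemannian product is conformally flat only when one factor is one-dimensional (or the factors are space forms of opposite curvature). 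In the Einstein case $\Ric\equiv\rho g$ one has $\rho$ constant by Schur's lemma; if $\rho>0$ then $M$ is compact with finite fundamental group by Myers and, being Einstein and locally conformally flat, has constant curvature, so its universal cover is $\Sph^n$; if $\rho=0$ then $\nabla^2 f=\tfrac12 g$ and Tashiro's rigidity identifies the universal cover with flat $\R^n$. In the remaining case $R\equiv\tfrac{n-1}2$ is constant, $\Ric(\nabla f)=\tfrac12\nabla R=0$, the kernel of $\Ric$ is a parallel rank-one distribution whose orthogonal complement is also parallel, so de Rham's theorem splits the universal cover as $N^{n-1}\times\R$; conformal flatness of the product forces $N$ to be a space form, and $\Ric_N=\tfrac12 g_N>0$ with $N$ simply connected makes it the round $\Sph^{n-1}$.

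For compact $M$ one bypasses the arguments of the two preceding paragraphs entirely: a compact gradient shrinking soliton has $R>0$, and integrating the Bochner identity for $|E|^2$ over the closed manifold (no cutoff, no boundary terms) forces $E\equiv0$ using only $W\equiv0$, so $M$ is Einstein and hence a spherical space form — neither $\Ric\ge0$ nor (\ref{eq0}) is used. Finally, among the three models only $\Sph^n$ has positive Ricci curvature, so if $\Ric>0$ the universal cover is $\Sph^n$ and $M$ is compact, which is the last assertion. The two places where I anticipate the real difficulty are the Bochner computation in the second paragraph — producing the sharp differential inequality with the correct structure constants so that the equality case is exactly the three model profiles — and the maximum principle at infinity in the third, where the convergence of the cutoff integrals under only the mild growth (\ref{eq0}) is precisely what lets Perelman's $\kappa$-noncollapsing hypothesis be dropped.
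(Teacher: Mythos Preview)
Your outline is essentially the paper's own strategy: pass to the drift Laplacian on the soliton, exploit a sharp algebraic inequality for the reaction term under $W=0$, run a weighted integral argument with weight $e^{-f}$ (using the quadratic lower bound on $f$ from $\Ric\ge0$ together with the growth bound (\ref{eq0}) and Shi's local estimates to justify the integrations), and read off the rigidity from the equality case. Two points where the paper is more specific than your sketch are worth flagging.

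First, the scale-invariant quantity the paper actually uses is $\Phi=|R_{ijkl}|^2/S^2$ (equivalently, up to constants, $|E|^2/S^2$ when $W=0$). Multiplying the elliptic identity for $\Phi$ by $|R_{ijkl}|^2e^{-f}$ and integrating yields \emph{three} pieces of information simultaneously: $\Phi$ is constant, the algebraic reaction term $P$ vanishes, and the tensorial identity $S\,\nabla_p R_{ijkl}=(\nabla_p S)\,R_{ijkl}$ holds. The algebraic equality $P=0$ is stronger than the inequality $|\tr(E^3)|\le\frac{n-2}{\sqrt{n(n-1)}}\,|E|^3$ you quote: because $P$ also contains $S$, its vanishing forces not merely the multiplicity pattern of $E$ but that one eigenvalue of $\Ric$ is exactly $0$ (and the others equal $S/(n-1)$). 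Your jump to the specific profile ``$\tfrac12$ with multiplicity $n-1$ and $0$ with multiplicity $1$'' is therefore not a consequence of your stated inequality alone; it is the full $P=0$ that does this.

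Second, and relatedly, your endgame asserts that the kernel of $\Ric$ is a parallel line field and then invokes de~Rham. The paper does not argue this way: it combines the identity $S\,\nabla_p R_{ik}=(\nabla_p S)\,R_{ik}$ with the contracted second Bianchi identity to deduce $\nabla S\equiv0$, whence $\nabla R_{ijkl}\equiv0$, so $(M,g)$ is locally symmetric; the classification then follows. Your de~Rham route would work \emph{after} one knows $\nabla\Ric\equiv0$, but that step is exactly what the gradient identity from the integral argument supplies and what your sketch omits. In short, the plan is right and matches the paper, but the mechanism that takes you from ``$\Phi$ constant'' to ``locally symmetric'' is the pair $\{P=0,\ S\nabla R=R\nabla S\}$, and both come out of the same integration-by-parts, not from a bare maximum principle.
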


 As a corollary we have a more general result than
 Theorem \ref{perelman1}.

\begin{corollary} Let $(M^3, g)$ be a
 gradient shrinking soliton whose Ricci curvature is positive and satisfying (\ref{eq0}).
 Then $M$ must be compact.
\end{corollary}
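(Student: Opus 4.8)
The plan is to read the corollary off directly from Theorem \ref{main1}: the only things to check are that the hypotheses of that theorem are satisfied in the present situation and that its conclusion forces compactness once the Ricci curvature is strictly positive.

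First I would observe that in dimension three the Weyl tensor vanishes identically, so the local-conformal-flatness hypothesis that Theorem \ref{main1} imposes only for $n\ge 4$ is irrelevant here; Theorem \ref{main1} applies unconditionally to a three-dimensional gradient shrinking soliton with nonnegative Ricci curvature satisfying the curvature growth bound (\ref{eq0}). Since positive Ricci curvature is in particular nonnegative, and (\ref{eq0}) is assumed, Theorem \ref{main1} yields that the universal cover $\widetilde M$ of $M^3$ is isometric to $\R^3$, $\Sph^3$ or $\Sph^2\times \R$.

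Next I would rule out the first and third possibilities using the strict positivity of $\Ric$: the flat metric on $\R^3$ has vanishing Ricci tensor, and the product metric on $\Sph^2\times\R$ has a zero Ricci eigenvalue in the $\R$-direction, so neither can be the universal cover of a manifold with $\Ric>0$. Hence $\widetilde M$ is isometric to $\Sph^3$, which is compact.

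Finally, a compact universal cover forces $M$ itself to be compact: the deck transformation group $\pi_1(M)$ acts freely and properly discontinuously on the compact manifold $\widetilde M=\Sph^3$, hence is finite, so $M=\widetilde M/\pi_1(M)$ is compact. (Alternatively, the soliton identity together with $\Ric>0$ gives a uniform positive lower bound on the Ricci curvature, and one could invoke Myers' theorem.) There is no genuine obstacle beyond Theorem \ref{main1} itself; the only point needing a word of care is the remark that the dimension-three case of Theorem \ref{main1} uses only the automatic vanishing of the Weyl tensor, not full local conformal flatness.
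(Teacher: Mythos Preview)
Your argument is correct and is exactly how the paper intends the corollary to be read: it is presented immediately after Theorem~\ref{main1} as a direct consequence (indeed the last sentence of that theorem already asserts compactness under positive Ricci), with no separate proof given. Your observations that the Weyl tensor vanishes automatically in dimension three and that $\R^3$ and $\Sph^2\times\R$ are excluded by strict positivity of $\Ric$ are precisely the implicit steps.
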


Some new invariant cones,
which bounds the Weyl curvature by the scalar curvature, have been  discovered in
\cite{BW2}  very recently. This might be related to our result.

The rotationally symmetric gradient shrinking solitons has been studied in \cite{Brett}. It can be easily checked that the rotational symmetric manifolds have vanishing Weyl curvature. Hence our result gives a self-contained
classification on rotationally symmetric gradient shrinking solitons. (The proof in \cite{Brett} appealed the strong result of B\"ohm-Wilking. However it does not require the curvature growth condition for the noncompact case.)

\bigskip
 {\it Acknowledgement}.  The first author would like to thank T. Ilmanen
 for his interests  and his hospitality during the first author's visit at ETH this summer.

\section{Preliminaries}

Recall that $(M, g)$ is a gradient shrinking soliton if there exists
a function $f$ such that its Hessian $f_{ij}$ satisfying
$$
R_{ij}+f_{ij} -\frac{1}{2}g_{ij}=0.
$$
As shown in \cite{CLN}, Theorem 4.1, there exists a family of
metrics $g(t)$, a solution to Ricci flow with the property that
$g(0)=g$ and a family of diffeomorphisms $\phi(t)$, which is
generated by the vector field  $X=\frac{1}{\tau} \nabla f$, such
that $\phi(0)=\id$ and $g(t)=\tau(t) \phi^*(t) g$ with
$\tau(t)=1-t$, as well as $f(t)=\phi^*(t) f$. The following can be
checked some straight forward computations \cite{CLN}.

\begin{lemma} For $\tau>0$,
\begin{eqnarray}
\frac{\partial}{\partial \tau} |\Ric|^2 &=& -\frac{2}{\tau} |\Ric|^2
-\langle \nabla |\Ric|^2, \nabla f\rangle, \label{help1}\\
\frac{\partial}{\partial \tau} S^2 &=& -\frac{2}{\tau} S^2 -\langle
\nabla S^2, \nabla f\rangle. \label{help2}
\end{eqnarray}
Here $S$ is the scalar curvature.
\end{lemma}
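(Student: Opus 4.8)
The plan is to read both identities off the self‑similar structure recalled above. By \cite{CLN}, Theorem 4.1, the soliton generates a Ricci flow $g(t) = \tau(t)\,\phi(t)^*g$ with $\tau(t) = 1-t$, $\phi(0)=\id$, $\phi(t)$ generated by $\frac{1}{\tau}\nabla f$, and $f(t) = \phi(t)^*f$. Reparametrizing by $\tau \in (0,\infty)$ — so that $\frac{\partial}{\partial\tau} = -\frac{\partial}{\partial t}$ and $\frac{\partial}{\partial\tau}\phi(t) = -\frac{1}{\tau}\nabla f$ — I will obtain (\ref{help1}) and (\ref{help2}) from one and the same computation, applied to a general natural scalar invariant of the metric that is scale‑homogeneous of degree $-2$.

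Here is that computation. Both $S^2$ and $|\Ric|^2$ are \emph{natural} scalar invariants of the metric and are \emph{homogeneous of degree $-2$} under scaling: for $c>0$ one has $(S_{cg})^2 = c^{-2}S_g^2$ and $|\Ric|^2_{cg} = c^{-2}|\Ric|^2_g$, since $R_{ij}$ is unchanged by $g \mapsto cg$ as a $(0,2)$‑tensor while forming $|\Ric|^2$ brings in two inverse metrics, each contributing a factor $c^{-1}$. Writing $Q$ for either quantity, naturality and homogeneity give
\[
Q(g(t))(x)\;=\;\tau^{-2}\big(\phi(t)^*Q(g)\big)(x)\;=\;\tau^{-2}\,Q(g)\big(\phi(t)(x)\big).
\]
Differentiating in $\tau$ at a fixed point $x$: the factor $\tau^{-2}$ contributes $-2\tau^{-3}Q(g)(\phi(t)x) = -\frac{2}{\tau}Q(g(t))(x)$, and, since $\frac{\partial}{\partial\tau}\phi(t)(x) = -\frac{1}{\tau}\nabla f$, the chain rule contributes $-\tau^{-3}\langle\nabla Q(g),\nabla f\rangle_g$ evaluated at $\phi(t)x$. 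Tracking the powers of $\tau$ — using that on functions $\nabla^{cg} = c^{-1}\nabla^g$ and $\langle\nabla^{cg}u,\nabla^{cg}v\rangle_{cg} = c^{-1}\langle\nabla^g u,\nabla^g v\rangle_g$, together with the naturality of these operations under $\phi(t)$ — this second term is exactly $-\langle\nabla Q,\nabla f\rangle$ computed with respect to $g(t)$ and $f(t)$. Summing the two contributions yields $\frac{\partial}{\partial\tau}Q = -\frac{2}{\tau}Q - \langle\nabla Q,\nabla f\rangle$, which is (\ref{help1}) for $Q = |\Ric|^2$ and (\ref{help2}) for $Q = S^2$.

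The argument is routine; the only point calling for care is the bookkeeping in the gradient term — verifying that the powers of $\tau$ coming from $\nabla^{g(t)}$, from $\langle\cdot,\cdot\rangle_{g(t)}$, and from $f(t) = \phi(t)^*f$ combine so that the pulled‑back term is precisely $-\langle\nabla Q,\nabla f\rangle$ with no leftover power of $\tau$. One could instead argue without the homogeneity language, deriving $\frac{\partial}{\partial t}S(g(t)) = S + \langle\nabla S,\nabla f\rangle$ and $\frac{\partial}{\partial t}|\Ric|^2(g(t)) = 2|\Ric|^2 + \langle\nabla|\Ric|^2,\nabla f\rangle$ directly from the self‑similar form above (comparing these with the standard Ricci flow evolution equations for $S$ and $|\Ric|^2$ yields, as a byproduct, the usual contracted‑Bianchi identities on the soliton), then multiplying the first by $2S$ and reinstating the $\tau$‑factors; but this is the same computation in different dress.
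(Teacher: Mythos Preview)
Your proof is correct, and the bookkeeping with the powers of $\tau$ checks out: writing $\phi^*g=\tau^{-1}g(t)$ and using $\langle\nabla_{\phi^*g}\,\cdot\,,\nabla_{\phi^*g}\,\cdot\,\rangle_{\phi^*g}=\tau\,\langle\nabla_{g(t)}\,\cdot\,,\nabla_{g(t)}\,\cdot\,\rangle_{g(t)}$ together with $Q(g)\circ\phi=\tau^2 Q(g(t))$ indeed gives $-\tau^{-3}\langle\nabla_g Q(g),\nabla_g f\rangle_g\big|_{\phi(t)(x)}=-\langle\nabla Q,\nabla f\rangle$ with no residual $\tau$. The paper itself does not supply a proof of this lemma---it simply declares the identities to be ``straightforward computations'' and cites \cite{CLN}---so your scale-homogeneity/naturality argument is a clean and acceptable way to furnish the missing details, and is in the same spirit as any direct verification from the self-similar form $g(t)=\tau\phi^*g$.
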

This particularly holds at $t=0$ (namely $\tau=1$). A direct
consequence is that
\begin{equation} \label{help3}
\frac{\partial}{\partial t} \left(\frac{|\Ric|^2}{S^2}\right)
=\langle \nabla \left(\frac{|\Ric|^2}{S^2}\right), \nabla f\rangle.
\end{equation}

We shall also need the following results. First we need Proposition
1.1 of \cite{N} to bound the scalar curvature from below.

\begin{proposition}\label{ni1}
Assume that $(M, g)$ is a non-flat gradient shrinking soliton.
Assume that it has nonnegative Ricci curvature. Then there exists
$\delta=\delta(M)>0$ such that $S\ge \delta$.
\end{proposition}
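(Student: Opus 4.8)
The strategy is to establish the pointwise positivity $S>0$ first, by the strong maximum principle, and then to promote it to a uniform lower bound using the gradient flow of the potential $f$. Recall the soliton identities that follow from $\Ric+\H(f)=\frac12 g$ and the contracted second Bianchi identity: $S+\Delta f=\frac n2$, $\nabla S=2\Ric(\nabla f)$, and $|\nabla f|^2-f+S\equiv c_0$ for a constant $c_0$. Since $\Ric\ge 0$ we obtain at once $S=\tr\Ric\ge 0$, $f\ge -c_0$, and $\langle\nabla S,\nabla f\rangle=2\Ric(\nabla f,\nabla f)\ge 0$. Moreover, from (\ref{help2}) --- equivalently, from the Ricci flow evolution $\frac{\p}{\p t}S=\Delta S+2|\Ric|^2$ --- one extracts the elliptic identity $\Delta S-\langle\nabla S,\nabla f\rangle=S-2|\Ric|^2$, which we rewrite as $LS:=\Delta S-\langle\nabla S,\nabla f\rangle-S=-2|\Ric|^2\le 0$.

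Thus $S$ is a nonnegative supersolution of the elliptic operator $L$, whose zeroth-order coefficient equals $-1<0$. If $S(p)=0$ at some point $p\in M$, the strong maximum principle forces $S\equiv 0$ on the connected manifold $M$; but then $LS\equiv 0$ gives $|\Ric|^2\equiv 0$, so the soliton equation reduces to $\H(f)=\frac12 g$, and it is classical (Tashiro) that a complete manifold carrying a non-constant function whose Hessian is a nonzero constant multiple of the metric is isometric to flat $\R^n$ --- contrary to the non-flat hypothesis. Hence $S>0$ everywhere. In the compact case this already finishes the proof, with $\delta=\min_M S>0$.

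For the noncompact case we argue along the descending gradient trajectories of $f$. One first needs that $f$ is a proper exhaustion function (bounded below we already know, $f\ge -c_0$); this is a structural feature of gradient shrinking solitons, and in the present setting $\H(f)\le\frac12 g$ gives enough control of $f$ along minimal geodesics to see it directly. Then $f$ attains its minimum and the critical set $K=\{\nabla f=0\}$ is compact. For arbitrary $x\in M$, let $\sigma(s)$ be the unit-speed integral curve of $-\nabla f/|\nabla f|$ starting at $x$. Along $\sigma$ we have $\frac{d}{ds}f(\sigma(s))=-|\nabla f|(\sigma(s))\le 0$ and, using $\nabla S=2\Ric(\nabla f)$, $\frac{d}{ds}S(\sigma(s))=-\frac{\langle\nabla S,\nabla f\rangle}{|\nabla f|}=-\frac{2\Ric(\nabla f,\nabla f)}{|\nabla f|}\le 0$. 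Hence $\sigma$ remains in the compact sublevel set $\{f\le f(x)\}$; its $\omega$-limit set is a nonempty connected subset of $K$ on which $S$ equals $\lim_{s}S(\sigma(s))$, so choosing $q$ there we get $S(x)\ge S(q)\ge\min_K S=:\delta$, and $\delta>0$ by the positivity already proved.

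The delicate point is the noncompact case, and precisely the two facts about $f$ invoked there: that $f$ is a proper exhaustion with compact critical set, and that the downhill gradient trajectories accumulate on that critical set. If one wishes to bypass the properness input, an alternative is to run the Omori--Yau maximum principle (legitimate since $\Ric\ge 0$) on $S$ via $\Delta S-\langle\nabla S,\nabla f\rangle=S-2|\Ric|^2$ together with $|\nabla f|^2=f+S-c_0$; but this route requires an a priori at-most-quadratic growth bound on $f$ and a more careful estimate of the cross term, so I expect the gradient-flow argument above to be the cleaner path.
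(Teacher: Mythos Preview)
The paper does not actually prove this proposition; it quotes it as Proposition~1.1 of \cite{N} and uses it as a black box. So there is no ``paper's own proof'' to compare against line by line. That said, your approach---strict positivity of $S$ via the strong maximum principle, then a uniform bound by flowing along $-\nabla f$ toward the critical set, using $\langle\nabla S,\nabla f\rangle=2\Ric(\nabla f,\nabla f)\ge 0$---is exactly the argument in \cite{N}, so in spirit you have recovered the intended proof.

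There are, however, two soft spots you flag yourself but do not close. First, the properness of $f$: your hint that ``$\H(f)\le\frac12 g$ gives enough control'' points the wrong way. That inequality, integrated along a geodesic, yields the \emph{upper} bound $f\le\frac14 r^2+O(r)$ (this is how one gets (\ref{help5})). The quadratic \emph{lower} bound (\ref{help4}), which is what properness needs, comes instead from the second variation formula: along a minimizing unit-speed geodesic $\gamma$ from a fixed base point one bounds $\int_0^r\Ric(\gamma',\gamma')\,ds$ above by a constant depending only on $n$ (choosing test vector fields supported near the endpoints), and then integrates $(f\circ\gamma)''=\frac12-\Ric(\gamma',\gamma')$ twice. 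This is Perelman's argument alluded to in Lemma~\ref{per-e1}, and it is logically prior to (not a consequence of) the bound $S\ge\delta$.

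Second, you assert that the critical set $K=\{\nabla f=0\}$ is compact and set $\delta=\min_K S$. Compactness of $K$ is not obvious from what you have written (on $K$ one has $f=S-c_0$, and $S$ is not yet known to be bounded). But you do not need it: from $|\nabla f|^2+S=f+c_0$ one sees that $S(p)=f(p)+c_0\ge f(o)+c_0=S(o)$ for every $p\in K$, where $o$ is any minimizer of $f$ (which exists once $f$ is proper). Hence $\inf_K S\ge S(o)>0$, and your gradient-flow inequality $S(x)\ge S(q)$ for some $q\in K$ finishes the job with $\delta=S(o)$. With these two repairs the argument is complete and coincides with the one the paper imports from \cite{N}.
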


The following result on the bound of $f$ as well as its gradient is
implicit in the argument of \cite{P2} (see also the proof of
Proposition 1.1 of \cite{N}).

\begin{lemma} \label{per-e1} Assume the same assumption as in Proposition \ref{ni1}.
There exist constants $B=B(M,f)$,  $C=C(M,f)>0$ such that
\begin{equation}\label{help4}
f(x)\ge \frac{1}{8} r^2(x) -C
\end{equation}
and
\begin{equation}\label{help5}
f(x)\le 2 r^2(x),\quad \quad |\nabla f|(x) \le 4r(x)
\end{equation}
for $r(x)\ge B$.
 Here $r(x)$ is the distance function to some fixed point $o\in
M$ with respect to $g(0)$ metric.
\end{lemma}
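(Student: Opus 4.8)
The plan is to exploit the two elementary identities satisfied by a gradient shrinking soliton. Tracing $R_{ij}+f_{ij}=\frac12 g_{ij}$ gives $\Delta f+S=\frac n2$, while taking the divergence and applying the contracted second Bianchi identity gives $\nabla_i S=2R_{ij}\nabla^j f$; combining the two shows that $|\nabla f|^2+S-f$ is constant, call it $C_0$, so that
$$f+C_0=|\nabla f|^2+S .$$
Since $(M,g)$ is non-flat with $\Ric\ge 0$, Proposition \ref{ni1} gives $S\ge\delta>0$, whence $f+C_0\ge S\ge\delta$ everywhere; in particular $u:=\sqrt{f+C_0}$ is a positive smooth function, and
$$|\nabla u|=\frac{|\nabla f|}{2\sqrt{f+C_0}}=\frac12\sqrt{1-\frac{S}{f+C_0}}\le\frac12 .$$

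From here the upper bounds in (\ref{help5}) are immediate. Integrating $|\nabla u|\le\frac12$ along a minimizing geodesic from $o$ to $x$ gives $u(x)\le u(o)+\frac12 r(x)$, hence $f(x)+C_0=u(x)^2\le\big(u(o)+\frac12 r(x)\big)^2$ and $|\nabla f|(x)\le u(x)\le u(o)+\frac12 r(x)$; for $r(x)$ larger than some $B$ the former is at most $2r^2(x)$ and the latter at most $4r(x)$.

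For the lower bound (\ref{help4}) I would fix $x$ with $s_0:=r(x)$ large, take a unit-speed minimizing geodesic $\gamma\colon[0,s_0]\to M$ from $o$ to $x$, and set $\varphi(s)=f(\gamma(s))$. The soliton equation restricted to $\gamma$ gives $\varphi''(s)=\H f(\gamma',\gamma')=\frac12-\Ric(\gamma',\gamma')$, so that
$$\varphi'(s)=\varphi'(0)+\frac s2-\int_0^s\Ric(\gamma',\gamma')\,ds' .$$
The key step is the uniform estimate $\int_0^s\Ric(\gamma',\gamma')\,ds'\le C_4$ for $s\le s_0-1$, with $C_4$ depending only on $M$. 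This I would get from the second variation of arc length for the minimizing geodesic $\gamma$: testing the index form against $\psi(s)e_i(s)$, where $\{e_i\}_{i=1}^{n-1}$ is a parallel orthonormal frame perpendicular to $\gamma'$ and $\psi$ is the piecewise-linear cutoff equal to $s$ on $[0,1]$, to $1$ on $[1,s_0-1]$, and to $s_0-s$ on $[s_0-1,s_0]$, one obtains $\int_1^{s_0-1}\Ric(\gamma',\gamma')\,ds'\le 2(n-1)$; combined with $\int_0^1\Ric(\gamma',\gamma')\,ds'\le\sup_{\overline B_1(o)}S$ and $\Ric\ge 0$ this gives the claim with $C_4=2(n-1)+\sup_{\overline B_1(o)}S$. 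Consequently $\varphi'(s)\ge\frac s2-C_5$ on $[0,s_0-1]$ with $C_5=C_4+|\nabla f|(o)$, and integrating yields $f(\gamma(s_0-1))\ge\frac14(s_0-1)^2-C_5(s_0-1)+f(o)$. Finally, since $d(x,\gamma(s_0-1))=1$ and $|\nabla u|\le\frac12$, we have $\sqrt{f(x)+C_0}\ge\sqrt{f(\gamma(s_0-1))+C_0}-\frac12$, whose right-hand side is $\frac{s_0}{2}\big(1+o(1)\big)$ as $s_0\to\infty$; hence $f(x)+C_0\ge\frac18 s_0^2$ once $r(x)\ge B$ (after possibly enlarging $B$), which is (\ref{help4}) with a suitable $C$.

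The hard part is exactly this uniform control of $\int_0^s\Ric(\gamma',\gamma')\,ds'$ along minimizing geodesics, which forces the second variation argument and the splitting of $\gamma$ near its two endpoints: near $o$ the integral is dominated by the curvature on a fixed ball, and the final unit segment near $x$ is absorbed by the Lipschitz bound on $\sqrt{f+C_0}$ rather than entering the integration at all. (The coefficients $\frac18,2,4$ are of course not sharp; the argument in fact gives $f=\frac14 r^2+O(r)$.)
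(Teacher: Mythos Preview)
Your argument is correct and is precisely the one the paper has in mind: the paper does not actually supply a proof of this lemma, but simply says that it ``is implicit in the argument of \cite{P2} (see also the proof of Proposition 1.1 of \cite{N}).'' What you have written out---the Hamilton identity $|\nabla f|^2+S-f=C_0$ giving the $\tfrac12$--Lipschitz bound on $\sqrt{f+C_0}$ for the upper estimates, and Perelman's second-variation bound $\int_\gamma \Ric(\gamma',\gamma')\le 2(n-1)+\sup_{\overline B_1(o)}S$ along minimizing geodesics for the lower estimate---is exactly that implicit argument, so there is nothing to compare.
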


Under the assumption that $(M, g)$ has nonnegative Ricci curvature,
it is also easy, from the soliton equation, to have that (e.g. from
 the proof of Proposition 1.1 in \cite{N})
\begin{equation}\label{help-scalarg}
|\nabla S|^2 \le 4S^2|\nabla f|^2.
\end{equation}
Using the soliton equation and the assumption that $R_{ij}\ge 0$ we
also have that
\begin{equation}\label{help-fhess}
|f_{ij}|^2 \le \max\{ \frac{n}{2}, S^2\}.
\end{equation}
 We need these inequalities to justify the finiteness of
some integrals. Most importantly recall the following local
derivative estimates of Shi (cf. Theorem 13.1 of \cite{H93}).

\begin{theorem}\label{shi-local}
For any $\alpha>0$ there exists a constant $C(n, K, r, \alpha)$ such
that if $(M, g(t))$ is a solution to Ricci flow with $t\in [0,
t_1]$, $0< t_1\le \frac{\alpha}{K}$, $p\in M$ and
$$
|R_{ijkl}|(x, t)\le K
$$
for all $x\in B_{g(0)}(p, r), \quad t\in [0, t_1]$, then
$$
|\nabla_s R_{ijkl}|(y, t)\le \frac{C(n, \sqrt{K}r,
\alpha)K}{\sqrt{t}}
$$
for all $y\in B_{g(0)}(p, \frac{r}{2})$ and $t\in (0, t_1]$.
Moreover, for the above $(y, t)$
$$
|\nabla^m R_{ijkl}|(y, t)\le \frac{C(n, m, K,
r,\alpha)}{t^{\frac{m}{2}}}.
$$

\end{theorem}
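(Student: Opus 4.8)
\emph{Proof proposal (Bernstein--Bando--Shi).}
The statement is Shi's local derivative estimate, and the natural route is the maximum-principle method applied to auxiliary quantities built from the covariant derivatives of the curvature tensor $\operatorname{Rm}:=R_{ijkl}$. I will sketch the first-order case ($m=1$, i.e.\ the bound for $\nabla R_{ijkl}$) and indicate the induction for general $m$. Fix the parabolic cylinder $Q=B_{g(0)}(p,r)\times[0,t_1]$ on which $|\operatorname{Rm}|\le K$, recall $t_1\le\alpha/K$ (so that along the flow the metrics on $Q$ stay uniformly equivalent), and write $\heat$ for the heat operator with respect to $g(t)$.

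First I would record the Bochner-type evolution inequalities under the Ricci flow. Using Uhlenbeck's $*$-notation, $\p_t\nabla^m\operatorname{Rm}=\D\nabla^m\operatorname{Rm}+\sum_{i+j=m}\nabla^i\operatorname{Rm}*\nabla^j\operatorname{Rm}$, one obtains, with constants depending only on $n$,
$$\heat|\operatorname{Rm}|^2\le -2|\nabla\operatorname{Rm}|^2+c_n|\operatorname{Rm}|^3,\qquad \heat|\nabla\operatorname{Rm}|^2\le -2|\nabla^2\operatorname{Rm}|^2+c_n|\operatorname{Rm}|\,|\nabla\operatorname{Rm}|^2.$$
The crux is to form a function on which these combine with a good sign. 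Set $u=(\b K^2+|\operatorname{Rm}|^2)\,|\nabla\operatorname{Rm}|^2$ for a constant $\b=\b(n)$ to be chosen. Expanding $\heat$ across the product via $\D(\phi\psi)=\phi\,\D\psi+\psi\,\D\phi+2\langle\nabla\phi,\nabla\psi\rangle$, the negative term $-2(\b K^2+|\operatorname{Rm}|^2)|\nabla^2\operatorname{Rm}|^2$ absorbs, by Cauchy--Schwarz, the gradient cross term $-2\langle\nabla|\operatorname{Rm}|^2,\nabla|\nabla\operatorname{Rm}|^2\rangle$ (which is $\le c_nK|\nabla\operatorname{Rm}|^2|\nabla^2\operatorname{Rm}|\le\b K^2|\nabla^2\operatorname{Rm}|^2+\tfrac{c_n^2}{4\b}|\nabla\operatorname{Rm}|^4$), and for $\b$ large the residual $|\nabla\operatorname{Rm}|^4$ is dominated by the $-2|\nabla\operatorname{Rm}|^4$ produced when $\heat|\operatorname{Rm}|^2$ is multiplied by $|\nabla\operatorname{Rm}|^2$. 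Combining this with $|\operatorname{Rm}|\le K$ and the elementary bound $\b K^2|\nabla\operatorname{Rm}|^2\le u\le(\b+1)K^2|\nabla\operatorname{Rm}|^2$, one arrives at an inequality of the form
$$\heat u\le -\frac{1}{C_1K^4}\,u^2+C_2K^2\,u,\qquad C_1,C_2=C_i(n).$$

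Next I would localize and run the maximum principle. Pick $\vp\ge0$ supported in $B_{g(0)}(p,r)$, equal to $1$ on $B_{g(0)}(p,r/2)$, with $|\nabla\vp|^2\le C\vp/r^2$ and $|\D_{g(t)}\vp|\le C/r^2$ on $Q$; the Laplacian bound uses $|\operatorname{Rm}|\le K$, and the mild time-dependence, if $\vp$ is taken as a function of $d_{g(t)}(p,\cdot)$, is harmless since the distance-distortion estimate gives $|\p_t d_{g(t)}|\le c_n\sqrt K$ on $Q$, so over $t\le t_1\le\alpha/K$ distances move by at most $c_n\alpha/\sqrt K$. Apply the maximum principle to $G=t\,\vp\,u$ on $B_{g(0)}(p,r)\times(0,t_1]$: a maximum point $(y_0,t_0)$ of $G$ with $G(y_0,t_0)>0$ satisfies $t_0>0$ and $y_0$ in the open ball (as $G$ vanishes at $t=0$ and on $\p B_{g(0)}(p,r)$), hence $\nabla G=0$, $\D G\le0$, $\p_t G\ge0$ there; substituting $\nabla u=-\tfrac{u}{\vp}\nabla\vp$ and the differential inequality for $u$, then multiplying through by $t\vp$, one obtains
$$\frac{G}{C_1K^4}\le\vp+C_2K^2t_1+t_1|\D\vp|+\frac{2t_1|\nabla\vp|^2}{\vp}\le 1+C_2\alpha K+\frac{C\alpha}{Kr^2},$$
so $G\le C(n,\sqrt{K}r,\alpha)K^4$ on all of $Q$; restricting to $B_{g(0)}(p,r/2)$, where $\vp\equiv1$, and using $u\ge\b K^2|\nabla\operatorname{Rm}|^2$ gives $|\nabla\operatorname{Rm}|^2(y,t)\le C(n,\sqrt{K}r,\alpha)K^2/t$, which is the first assertion.

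For the general case I would induct on $m$. Iterating the argument above on a nested family of balls $B_{g(0)}(p,2^{-j}r)$ gives bounds $t^{j}|\nabla^j\operatorname{Rm}|^2\le C_j(n,\sqrt{K}r,\alpha)K^2$ for $j<m$ on a slightly smaller cylinder; then in $\heat|\nabla^m\operatorname{Rm}|^2\le -2|\nabla^{m+1}\operatorname{Rm}|^2+c_{n,m}\sum_{i+j=m}|\nabla^i\operatorname{Rm}|\,|\nabla^j\operatorname{Rm}|\,|\nabla^m\operatorname{Rm}|$ every reaction term but the leading one is already controlled, and replacing $u$ by $u_m=\big(\b K^2+\sum_{j<m}t^{j}|\nabla^j\operatorname{Rm}|^2\big)\,t^m|\nabla^m\operatorname{Rm}|^2$ produces a differential inequality of the same shape, to which the cutoff/maximum-principle step applies verbatim. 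The step I expect to be the main obstacle is the ``crux'' above: closing the reaction and gradient terms with the correct sign forces one to exploit the precise Bochner structure of $\heat|\nabla^m\operatorname{Rm}|^2$ together with an $m$-dependent choice of $\b$; a secondary technical point is the handling of the time-dependent cutoff, which rests on the distance-distortion estimate available under the assumed curvature bound.
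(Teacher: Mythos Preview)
The paper does not prove this theorem: it is stated as a known result and attributed to Shi via Theorem~13.1 of \cite{H93}, with no argument given. Your proposal is a faithful outline of the standard Bernstein--Bando--Shi maximum-principle proof (auxiliary quantity $u=(\beta K^2+|\operatorname{Rm}|^2)|\nabla\operatorname{Rm}|^2$, cutoff localization, then induction on $m$), which is exactly the method behind the cited reference; so there is nothing to compare against, and your sketch is consistent with the intended source.
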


\section{Three dimensional case}

We first give a different proof to Perelman's theorem mentioned in
the introduction.  In fact what we prove is a more general result
since we  assume neither that gradient shrinking soliton is
$\kappa$-noncollapsed nor that  the curvature is uniformly bounded.
Most argument of the proof can also be used in dimensions $n\ge 4$.

We assume that the Ricci curvature  satisfies that for any
$\epsilon>0$, there exists $\beta(\epsilon)>0$ such that
\begin{equation}\label{main-assum0}
|\Ric|(y,t) \le \exp(\epsilon r^2(x)+\beta(\epsilon))
\end{equation}
for all $y\in B_{g(-\frac{1}{2})}(x, \frac{r(x)}{2})$ and $t\in
[-\frac{1}{2}, 0]$. Here $r(x)$ is the distance function to some
fixed point $o\in M$ with respect to the metric $g(0)$. Notice that
(\ref{main-assum0}) can be easily verified if we assume that
$|\Ric|$ is uniformly bounded at $t=0$. The main purpose of this
section is to show the following result.

\begin{theorem} \label{main3d} Let $(M^3, g)$ be a complete gradient shrinking
soliton with the positive sectional curvature and the Ricci
curvature satisfies (\ref{main-assum0}). Then $M$ must be the
quotient of $\Sph^3$.
\end{theorem}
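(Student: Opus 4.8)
The plan is to show that $(M^3,g)$ must be Einstein; since in dimension three an Einstein metric has constant sectional curvature, positivity of the sectional curvature then forces $(M,g)$ to be a spherical space form, in particular compact and isometric to a quotient of $\Sph^3$. First observe that positive sectional curvature gives $\Ric>0$, so Proposition \ref{ni1} yields $S\geq\delta>0$ on $M$; Lemma \ref{per-e1} gives $\frac18 r^2-C\leq f$ and $|\nabla f|\leq 4r$, and (\ref{main-assum0}) together with Theorem \ref{shi-local} bounds $|\nabla\Ric|$ and $|\nabla^2\Ric|$ by $\exp(\epsilon r^2+C_\epsilon)$ for every $\epsilon>0$. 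Hence the weight $e^{-f}$, and more generally $S^{2k}e^{-f}$ (which still decays since $S$ is at most quadratic in $r$), dominates any such integrand, so all the integrations by parts used below carry no boundary term.

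\textbf{The key identity.} Set $u:=|\Ric|^2/S^2$, which in dimension three satisfies $\frac13\leq u<1$, with $u\equiv\frac13$ precisely when $g$ is Einstein. Writing $\Delta_f:=\Delta-\langle\nabla f,\nabla\,\cdot\,\rangle$ and using the soliton identities $\Delta_f S=S-2|\Ric|^2$ and $\Delta_f R_{ij}=R_{ij}-2R_{ikjl}R_{kl}$, together with the fact that in dimension three $R_{ijkl}$ is the quadratic algebraic expression in $\Ric$ dictated by the vanishing of the Weyl tensor, one computes, with $E:=|\Ric|^2-\frac13 S^2\geq0$ and $h:=f-4\log S$,
\begin{equation}\label{Hess-uu}
\Delta_h u=\frac{2}{S^2}\left(\left|\nabla\Ric\right|^2-\frac13|\nabla S|^2-\frac{E\,|\nabla S|^2}{S^2}\right)+\mathcal{R},\qquad \mathcal{R}=\frac{2E}{3S}+\frac{4E^2}{S^3}+\frac{8}{S^2}\,\tr\!\left(\left(\Ric-\frac{S}{3}\,g\right)^{3}\right).
\end{equation}
An elementary estimate on the eigenvalues of $\Ric$ gives $\mathcal{R}\geq\frac{2E}{3S^3}\bigl(S-\sqrt6\,\sqrt E\bigr)^2\geq0$, and inspection of the equality case shows that $\mathcal{R}=0$ at a point forces either $E=0$ there or an eigenvalue configuration of $\Ric$ with a vanishing eigenvalue; since $\Ric>0$, the latter is impossible, so $\mathcal{R}>0$ wherever $E>0$.

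\textbf{Running the maximum principle and conclusion.} In the compact case, evaluate (\ref{Hess-uu}) at a maximum point $p$ of $u$. There $\nabla u(p)=0$, which is equivalent to $\nabla E=\frac{2E}{S}\nabla S$ at $p$, so $\bigl|\nabla|\Ric-\frac{S}{3}g|\bigr|^2=\frac{E|\nabla S|^2}{S^2}$ at $p$; by the Kato inequality $|\nabla\Ric|^2-\frac13|\nabla S|^2=|\nabla(\Ric-\frac{S}{3}g)|^2\geq\bigl|\nabla|\Ric-\frac{S}{3}g|\bigr|^2$, so the bracket in (\ref{Hess-uu}) is $\geq0$ at $p$, while $\Delta_h u(p)=\Delta u(p)\leq0$. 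Hence $\Delta_h u(p)=0$, which forces $\mathcal{R}(p)=0$, hence $E(p)=0$, hence $\max_M u=\frac13$, hence $u\equiv\frac13$ and $g$ is Einstein; in dimension three this means constant positive sectional curvature, so $(M,g)$ is a quotient of $\Sph^3$. For noncompact $M$ one reaches the same conclusion either through the Omori--Yau maximum principle applied to $u$ (licit since $\Ric\geq0$), or by integrating (\ref{Hess-uu}) against $e^{-h}=S^4e^{-f}$ and $\Delta_f u$ against $e^{-f}$ over $M$ and combining the two resulting identities; in either case (\ref{main-assum0}), Lemma \ref{per-e1} and Theorem \ref{shi-local} are exactly what make the limiting procedure or the integrations rigorous and supply the decay of $|\nabla u|$, $|\nabla^2 u|$ at infinity. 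The main obstacle is that the bracket in (\ref{Hess-uu}) is \emph{not} pointwise nonnegative --- it is so only at critical points of $u$ --- so $u$ cannot be declared $\Delta_h$-subharmonic outright; handling this delicate point, by controlling the curvature and its first two derivatives near infinity via the growth hypothesis, is the heart of the noncompact case.
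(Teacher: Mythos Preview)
Your approach is essentially the paper's: both study $u=|\Ric|^2/S^2$ via Hamilton's identity (Proposition~\ref{h82-cal1}) specialized to the soliton, then exploit that the reaction term $P/S^3$ is nonnegative and vanishes only when $\Ric$ is Einstein (under positive sectional curvature). The compact case is fine. The genuine gap is in the noncompact case, and it stems from a suboptimal choice of drift.

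You take $h=f-4\log S$ and obtain a first-order ``bracket'' $\frac{2}{S^2}\bigl(|\nabla\Ric|^2-u|\nabla S|^2\bigr)$ that is \emph{not} pointwise nonnegative, as you yourself note. Your proposed remedies do not close this. For Omori--Yau: along a maximizing sequence $p_k$ one only has $|\nabla u(p_k)|\to 0$, but $|\nabla h|\sim |\nabla f|\sim r(p_k)$ may blow up, and the bracket itself involves $|\nabla\Ric|^2$, which under (\ref{main-assum0}) is controlled only by $\exp(\epsilon r^2)$; so neither $\langle\nabla h,\nabla u\rangle(p_k)\to 0$ nor ``bracket $\ge -o(1)$ at $p_k$'' is justified. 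For the integration: $\int_M(\Delta_h u)\,e^{-h}=0$ gives $\int(\text{bracket}+\mathcal R)\,e^{-h}=0$, and with an indefinite bracket this yields nothing; the vague ``combine with $\int(\Delta_f u)e^{-f}$'' does not produce a sign either.

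The fix is a one-line change: use $\tilde h=f-2\log S$ instead of $f-4\log S$. Then the first-order term reorganizes into the manifestly nonnegative square
\[
\Delta_{\tilde h}\,u=\frac{2}{S^4}\bigl|\,S\,\nabla_p R_{ij}-\nabla_p S\,R_{ij}\,\bigr|^2+\frac{P}{S^3}\ \ge\ 0,
\]
which is exactly the paper's equation~(\ref{key-eq1}). Now $u$ is genuinely $\Delta_{\tilde h}$-subharmonic, and integrating against $e^{-\tilde h}=S^2e^{-f}$ (or, as the paper does, against $|\Ric|^2e^{-f}$, which yields the extra $\int|\nabla u|^2S^2e^{-f}$ term) immediately forces both nonnegative pieces to vanish identically; no Kato trick, no Omori--Yau, and no ``delicate point'' remain. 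The growth hypothesis (\ref{main-assum0}), Lemma~\ref{per-e1}, Proposition~\ref{ni1} and Shi's estimates are then used exactly as you describe, but only to justify finiteness of the integrals and the integration by parts --- not to wrestle with a sign problem that should never have arisen.
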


Note that we do not need to assume $(M, g)$ is
$\kappa$-non-collapsed. The proof also concludes that
$M=\Sph^3/\Gamma$ directly without appealing to Hamilton's result.
 In the later section  we in fact  directly obtain a
classification of solitons under the assumption that $\Ric\ge 0$.

First we recall a result of Hamilton. In \cite{H82}, the following
result was proved for solutions to Ricci flow on three manifold $M$.

\begin{proposition}\label{h82-cal1}
\begin{equation}\label{heat1}
\heat
\left(\frac{|\Ric|^2}{S^2}\right)=-\frac{2}{S^4}\left|S\nabla_p
R_{ij} -\nabla_p S R_{ij}\right|^2 -\frac{P}{S^3} +\langle \nabla
\left( \frac{|\Ric|^2}{S^2}\right), \nabla \log S^2\rangle,
\end{equation}
where
$$
P=\frac{1}{2}\left((\mu+\nu-\lambda)^2
(\mu-\nu)^2+(\lambda+\nu-\mu)^2(\lambda-\nu)^2
+(\lambda+\mu-\nu)^2(\lambda -\mu)^2\right)
$$
with $\mu$, $\nu$ and $\lambda$ are eigenvalues of $\Ric$.
\end{proposition}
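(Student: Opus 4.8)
This identity is due to Hamilton \cite{H82}; one could simply cite it, but here is the route I would take to reprove it. The plan is to treat $w:=|\Ric|^2/S^2$ as a quotient $u/v$ with $u=|\Ric|^2$ and $v=S^2$, and to start from the reaction--diffusion rule
\begin{equation}
\heat\lf(\frac{u}{v}\ri)=\frac{1}{v}\lf(\heat u-\frac{u}{v}\,\heat v\ri)+2\lf\langle\nabla\lf(\frac{u}{v}\ri),\,\nabla\log v\ri\rangle,
\end{equation}
valid for any positive function $v$. The two inputs needed are then the evolutions of $|\Ric|^2$ and of $S^2$ under the Ricci flow. From $\heat S=2|\Ric|^2$ one gets $\heat S^2=4S|\Ric|^2-2|\nabla S|^2$. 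From the standard evolution $\heat R_{ij}=2R_{ikjl}R_{kl}-2R_{ik}R_{kj}$, together with the vanishing of the Weyl tensor in dimension three --- so that, up to the usual sign conventions,
\begin{equation}
R_{ikjl}=R_{ij}g_{kl}+R_{kl}g_{ij}-R_{il}g_{kj}-R_{kj}g_{il}-\frac{S}{2}\lf(g_{ij}g_{kl}-g_{il}g_{kj}\ri)
\end{equation}
--- one finds that $\heat R_{ij}$, and hence $\heat|\Ric|^2=-2|\nabla\Ric|^2+(\text{a homogeneous cubic in }\Ric)$, are explicit functions of $\Ric$ and $S$ alone.

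Substituting these into the quotient rule, I would separate a gradient part from a reaction part. For the gradient part, the contribution $-2|\nabla\Ric|^2/S^2$ produced by $\heat|\Ric|^2$, the contribution $+2|\Ric|^2|\nabla S|^2/S^4$ produced by the $-\tfrac{u}{v}\heat v$ term, and exactly one of the two copies of $\langle\nabla w,\nabla\log S^2\rangle$ supplied by the quotient rule should recombine --- after using $R_{ij}\nabla_p R_{ij}=\tfrac{1}{2}\nabla_p|\Ric|^2$ to identify the cross term --- precisely into $-\frac{2}{S^4}\lf|S\nabla_p R_{ij}-\nabla_p S\,R_{ij}\ri|^2$, with the remaining copy giving the term $\langle\nabla(|\Ric|^2/S^2),\nabla\log S^2\rangle$ of the statement (this is exactly why the coefficient there is $1$ and not $2$). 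This part is purely formal and I expect it to go through verbatim.

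The content is in the reaction part. After collecting terms it reduces to a pointwise algebraic identity of the form $P=4|\Ric|^4-S\cdot(\text{the cubic above})$, which is what makes the reaction terms combine into $-P/S^3$. To verify it I would diagonalize $\Ric$ at the point in question, with eigenvalues $\lambda,\mu,\nu$, so that $S=\lambda+\mu+\nu$ and $|\Ric|^2=\lambda^2+\mu^2+\nu^2$ and the cubic becomes an explicit symmetric polynomial; the asserted equality is then an identity between degree-four symmetric polynomials in $\lambda,\mu,\nu$, checked by expanding both sides (conveniently, in terms of the power sums). One reads off that the answer is precisely the stated sum of three squares, which incidentally makes $P\ge0$ manifest.

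The step I expect to be the main obstacle --- bookkeeping rather than anything conceptual --- is pinning down the reaction term of $\heat R_{ij}$ in dimension three, and hence the cubic part of $\heat|\Ric|^2$, with the correct signs and numerical coefficients (i.e. carrying the dimension-three curvature identity through the evolution equation), and then carrying out the symmetric-function algebra needed to recognize the outcome as $-P/S^3$; by contrast the diffusion terms assemble into the square above essentially on their own. As noted, one may instead simply invoke \cite{H82}.
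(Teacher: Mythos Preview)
Your proposal is correct. The paper itself does not reprove this identity at all: it simply attributes Proposition~\ref{h82-cal1} to Hamilton \cite{H82} and moves on, so your opening remark that ``one could simply cite it'' is exactly what the paper does. The derivation you outline (quotient rule for the heat operator, the three-dimensional decomposition of $R_{ikjl}$ via $\Rc_W=0$, then the pointwise symmetric-polynomial check for the reaction term) is the standard route and is precisely the computation the paper carries out---again without details---for the higher-dimensional analogues in Propositions~\ref{co2} and~\ref{co3}.
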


If $(M^3, g)$ is gradient shrinking soliton, combining the
discussions above we have that at $t=0$,
\begin{eqnarray}\label{key-eq1}
0&=&\Delta \left(\frac{|\Ric|^2}{S^2}\right)-\langle \nabla
\left(\frac{|\Ric|^2}{S^2}\right), \nabla
f\rangle-\frac{2}{S^4}\left|S\nabla_p R_{ij} -\nabla_p S
R_{ij}\right|^2 \\
&\, &-\frac{P}{S^3} +\langle \nabla \left(
\frac{|\Ric|^2}{S^2}\right), \nabla \log S^2\rangle.\nonumber
\end{eqnarray}
Now multiply $|\Ric|^2 e^{-f}$ on the both sides of the above
equation  then integrate by parts. Here we have assumed that all
integrals involved are finite and the integration by parts can be
performed, which we justify later.

\begin{eqnarray*}
0&=& \int_M -\langle \nabla \left(\frac{|\Ric|^2}{S^2}\right),
\nabla |\Ric|^2 \rangle e^{-f}-\frac{2|\Ric|^2}{S^4}\left|S\nabla_p
R_{ij} -\nabla_p S
R_{ij}\right|^2  e^{-f}\\
&\, &\int_M -\frac{P}{S^3}|\Ric|^2 e^{-f} +\langle \nabla \left(
\frac{|\Ric|^2}{S^2}\right), \nabla \log S^2\rangle |\Ric|^2 e^{-f}.
\end{eqnarray*}
Using that
$$
\nabla \left(\frac{|\Ric|^2}{S^2}\right)=\frac{\nabla
|\Ric|^2}{S^2}-\frac{\nabla S^2}{S^4}|\Ric|^2
$$
we have that
\begin{eqnarray*}
&\quad&\int_M -\langle \nabla \left(\frac{|\Ric|^2}{S^2}\right),
\nabla |\Ric|^2 \rangle e^{-f}+\langle \nabla \left(
\frac{|\Ric|^2}{S^2}\right), \nabla \log S^2\rangle |\Ric|^2
e^{-f}\\
&= &-\int_M \left|\nabla\left(\frac{|\Ric|^2}{S^2}\right)\right|^2
S^2 e^{-f}.
\end{eqnarray*}
Hence we have that
\begin{eqnarray*}
0&=& \int_M -\left|\nabla\left(\frac{|\Ric|^2}{S^2}\right)\right|^2
S^2 e^{-f}-\frac{2|\Ric|^2}{S^4}\left|S\nabla_p R_{ij} -\nabla_p S
R_{ij}\right|^2  e^{-f}\\
&\, &\int_M -\frac{P}{S^3}|\Ric|^2 e^{-f}.
\end{eqnarray*}
In particular, $\frac{|\Ric|^2}{S^2}$ is a constant,
\begin{equation}\label{con1}
S\nabla_p R_{ij} -\nabla_p S R_{ij}=0
\end{equation}
and $P=0$. If we choose a orthornormal frame such $R_{ij}$ is
diagonal, the equality (\ref{con1}) implies that
\begin{eqnarray}
S \nabla_p R_{jj} &=&\nabla_p S R_{jj} \label{d-1}\\
S\nabla_p R_{ij}&=&0, \mbox{ for } i\ne j \label{d-2}
\end{eqnarray}
Using the second Bianchi identity:
$$
\frac{1}{2}\nabla_i S =\sum_p \nabla _p R_{ip} =\nabla_i R_{ii}
$$
we have that
$$
\frac{1}{2}S\nabla_i S =S \nabla_j R_{jj}=\nabla_i S R_{ii}.
$$
On the other hand, $P=0$ implies that
$R_{11}=R_{22}=R_{33}=\frac{1}{3}S$. We thus have that
$$
\frac{1}{2}S\nabla_i S=(\nabla_i S) \frac{S}{3}
$$
which implies that $S$ is a constant. Then (\ref{d-1}) and
(\ref{d-2}) implies that $\nabla_p R_{ij}=0$ for any $p, i, j$. This
implies that $M$ is a compact locally symmetric space with positive
curvature. The claim then follows from classical known results.

Now with the help of  Proposition \ref{ni1} and Lemma \ref{per-e1}
we now  justify the finiteness of the integrals involved and the
integration by parts.

First note that if we assume that $\sup_{x\in M}|R_{ijkl}|(x)\le C$
for some $C>0$, namely the curvature is bounded, invoking the
Bernstein-Bando-Shi type derivative estimates (cf. \cite{CK},
Theorem 7.1), we have that $|\nabla^m R_{ijkl}|$ are uniformly
bounded on $M$. Hence all the integrals involved are finite which
then implies, via cut-off function argument, that the integrations
by parts are completely legal,  in view of the fast decay of
$e^{-f}$ ensured by lemma \ref{per-e1} and the lower bound of $S$
provided by Proposition \ref{ni1}.

For the general case, notice first that the assumption on $|\Ric|$
is equivalent to the same  assumption on $|R_{ijkl}|$ (with some
factor of absolute constant). Hence we have that for any
$\epsilon>0$, there exists $\beta(\epsilon)>0$ such that
\begin{equation}\label{main-assum1}
|R_{ijkl}|(y,t) \le \exp(\epsilon r^2(x)+\beta(\epsilon))
\end{equation}
for all $y\in B_{g(-\frac{1}{2})}(x, \frac{r(x)}{2})$ and $t\in
[-\frac{1}{2}, 0]$.

Below we estimate $\Delta \left(\frac{|\Ric|^2}{S^2}\right)
|\Ric|^2$. The others are similar. Applying the local derivative
estimate of Shi (cf. Theorem 13.1 of \cite{H93}) we have that
\begin{eqnarray*}
|\nabla_p R_{ijkl}|(x, 0) &\le& C_1 \exp(\frac{3}{2}\epsilon r^2(x)
+\beta_1(\epsilon))\\
|\nabla_p\nabla_q R_{ijkl}|(x, 0)&\le & C_2 \exp(\frac{9}{4}\epsilon
r^2(x)+\beta_2(\epsilon)).
\end{eqnarray*}
Direct computation shows that
\begin{eqnarray*}
\Delta\left(\frac{|\Ric|^2}{S^2}\right)&=& \frac{\Delta
|\Ric|^2}{S^2}-2\frac{\langle \nabla |\Ric|^2, \nabla \log
S^2\rangle}{S^2}+2|\Ric|^2 \frac{|\nabla \log S^2|^2}{S^2}\\
&\quad & -\frac{\Delta S^2}{S^4}|\Ric|^2.
\end{eqnarray*}
At $t=0$ there exists absolute constants $C_i, \, i=3,4,5$  and
$\beta_3(\epsilon)$ depending only on $\beta_1$ and $\beta_2$ such
that for $r(x)>>1$,
\begin{eqnarray*}
I&=&\left(\left|\frac{\Delta |\Ric|^2}{S^2}\right||\Ric|^2\right)(x,
0)\le
\frac{C_3}{\delta^2} \exp(6\epsilon r^2(x)+\beta_3(\epsilon)),\\
II&=&\left(\left|\frac{\langle \nabla |\Ric|^2, \nabla \log
S^2\rangle}{S^2}\right||\Ric|^2 +|\Ric|^4 \frac{|\nabla \log
S^2|^2}{S^2}\right)(x,0)\\
&\quad& \le \frac{C_4}{\delta^2}\exp(6\epsilon
r^2(x)+\beta_3(\epsilon)),\\
III&=&\left(\left|\frac{\Delta S^2}{S^4}\right||\Ric|^4\right)(x,
0)\le\frac{C_5}{\delta^2}\exp(6\epsilon r^2(x)+\beta_3(\epsilon)).
\end{eqnarray*}
In the last one we have used the estimates (\ref{help5}),
(\ref{help-scalarg}), (\ref{help-fhess}), as well as
$$
\nabla_i S =2R_{ij} f_j
$$
which then implies
$$
\Delta S =\langle \nabla S, \nabla f\rangle +2R_{ij}f_{ij}\le
|\nabla S||\nabla f|+2S|\sum_{ij}f^2_{ij}|^{1/2}.
$$
Putting the above estimates together with (\ref{help4}) we conclude
that at $t=0$,
$$
\int_M \left|\Delta\left(\frac{|\Ric|^2}{S^2}\right)\right||\Ric|^2
e^{-f}\, d\mu_0<\infty.
$$
Similarly one can establish the finiteness of other integrals
involved. Once we have the the finiteness of the  integration, the
integrations by parts can be checked by approximation via the
cut-off functions. This is somewhat standard we hence omit the
details.

\begin{remark} An argument similar to the one used here was  originated by
Huisken in  his  classification of mean convex shrinking solitons of
mean curvature flow in $\R^{n+1}$ \cite{Hu2}.
\end{remark}

\section{High dimension-preliminaries}
Most results in this section are either known (cf. \cite{Hu, H86})
or can be derived easily from the known ones in the literature.  We
include them here for the completeness. We also adapt them into the
form needed by us.

Recall the evolution formula of the curvature under the Ricci flow
\cite{H82}:
\begin{eqnarray*}
\heat R_{ijkl}&=& 2(\Rc^2+\Rc^{\#})_{ijkl} \\
&\,
&-\left(R_{ip}R_{pjkl}+R_{jp}R_{ipkl}+R_{kp}R_{ijpl}+R_{lp}R_{ijkp}\right)
\end{eqnarray*}
where $ Q(\Rc)=\Rc^2+\Rc^{\#} $ is defined via the Lie algebra
structure of $\wedge^2(n)$, which can be identified with the Lie
algebra of $O(n)$. The below is a brief explanation.

Let $(E, g)$ be a Euclidean space with metric $g$. We can make the
following identifications: $\otimes^2E$, the  tensor space, can be
identified  with $GL(n, \R)$, the linear transformations on $E$ (for
any $x\otimes y\in \otimes^2E$, $x\otimes y(z)=\langle y, z\rangle
x$ is the corresponding element of $GL(n, \R)$); under this
identification, the space symmetric two tensors $S^2E$ corresponds
to  the symmetric transformations $S^2(E)$; $\wedge^2E$ can be
identified with $so(n)$ ($e_i\wedge e_j=e_i\otimes e_j -e_j\otimes
e_i$ is identified with $E_{ij}$ with $1$ at $(i, j)$-th position
and $-1$ at $(j, i)$-th position. The metric on $TM$ extends
naturally to all the related tensor spaces such as $\otimes^2TM$,
$S^2TM$, $\wedge^2TM$. With respect to the previous identification,
the metric on $so(n)$ is given by $\langle A,
B\rangle=-\frac{1}{2}\operatorname{tr}(AB)$
($=\frac{1}{2}\operatorname{tr}(A^tB))$ such that $\{e_i\wedge
e_j\}_{i<j}$ is an orthonormal basis of $\wedge^2 TM$. The
identification also equips $\wedge^2 TM$ with a Lie algebra
structure, which is of fundamental importance in the study of
evolution of curvature operators under  Ricci flow. This  was first
observed by Hamilton \cite{H86}. Let us recall this fact first. For
an orthonormal basis $\phi_{\alpha}$ of $\wedge^2TM$ (say
$\phi_\alpha=e_i\wedge e_j$, which is identified with $E_{ij}$), the
Lie bracket is given by
$$
[\phi_\alpha, \phi_\beta]=c_{\alpha \beta \gamma} \phi_\gamma.
$$
It is easy to check, by simple linear algebra, that
$$
\langle [\phi, \psi],\omega\rangle =-\langle [\omega, \psi],
\phi\rangle.
$$
This immediately implies that $c_{\alpha \beta\gamma}$ is
anti-symmetric. If $A, B\in S^2(\wedge^2TM)$ one can define
$$
(A\#B)_{\alpha \beta}=\frac{1}{2}c_{\alpha \gamma \eta}c_{\beta
\delta \theta}A_{\gamma \delta}B_{\eta \theta}.
$$
It is easy to see that $A\#B$ is symmetric too. Also from the
anti-symmetry of $c_{\alpha\beta\gamma}$
$$
A\#B=B\#A.
$$
The easy computation also shows that
$$
\langle (A\#B)(\phi), \psi\rangle =\frac{1}{2}\sum_{\alpha
\beta}\langle [A(\omega_\alpha), B(\omega_\beta)], \phi\rangle \cdot
\langle [\omega_\alpha, \omega_\beta], \psi \rangle
$$
if $\{\omega_\alpha\}$ is an orthonormal basis.  This particularly
implies that $\operatorname{tr}((A\#B)\cdot C)$ is symmetric in $A,
B, C$ since
\begin{eqnarray*}
\operatorname{tr}((A\#B)\cdot C)&=&\sum_{\gamma}\langle (A\#B)\cdot
C(\omega_\gamma), \omega_\gamma\rangle\\
&=&\frac{1}{2}\sum_{\alpha \beta\gamma}\langle [A(\omega_\alpha),
B(\omega_\beta)], C(\omega_\gamma)\rangle \langle [\omega_\alpha,
\omega_\beta], \omega_\gamma\rangle.
\end{eqnarray*}
Now define
$$
\tri(A, B, C)=\tr((AB+BA+2A\#B)C)
$$
which is symmetric in all variables. If we write
$$
\Rc(e_i\wedge e_j)=\frac{1}{2} \sum_{k, l}R_{ijkl}e_k\wedge e_l
$$
we would have that
$$
|R_{ijkl}|^2 =4\langle \Rc, \Rc\rangle.
$$
We denote $\tri(\Rc)=\tri(\Rc, \Rc, \Rc)=\langle 2(\Rc^2 +\Rc^{\#}),
\Rc\rangle$ and $Q(\Rc)=\Rc^2+\Rc^{\#}$.

The curvature operator $\Rc$ has an orthogonal splitting, with
respect irreducible $O(n)$ representation, into the trace part
$\Rc_{\I}=\frac{S}{n(n-1)}\I$,  the traceless Ricci part
$\Rc_{\Ric_0}=\frac{2}{n-2}\Ric_0\wedge \id$, where $\Ric_0$ denotes the traceless part of
the Ricci curvature, and  the Weyl
curvature $\Rc_W$ (cf. \cite{BW}). We denote the three subspaces  by $\langle \I\rangle$, $\langle \Ric_0\rangle$ and $\langle W\rangle$ respectively.  Equipped with the above notations
we have that
\begin{lemma}
\begin{equation}\label{co1}
\heat |R_{ijkl}|^2 =8\tri (\Rc) -2|\nabla_p R_{ijkl}|^2.
\end{equation}
\end{lemma}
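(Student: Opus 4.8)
The plan is to obtain (\ref{co1}) directly from the evolution formula
\[
\heat R_{ijkl}= 2(\Rc^2+\Rc^{\#})_{ijkl}-\lf(R_{ip}R_{pjkl}+R_{jp}R_{ipkl}+R_{kp}R_{ijpl}+R_{lp}R_{ijkp}\ri)
\]
recorded above, together with the identity $\frac{\p}{\p t}g^{ij}=2R^{ij}$ coming from the Ricci flow equation, and then to translate the resulting reaction term into the language of operators on $\wedge^2 TM$.

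First I would write $|R_{ijkl}|^2=g^{ia}g^{jb}g^{kc}g^{ld}R_{ijkl}R_{abcd}$ and differentiate in $t$. Differentiating the four inverse metrics yields four terms, each of which --- after using the pair symmetry and the first Bianchi identity of $R_{ijkl}$ to relabel indices --- equals $2R_{ip}R_{pjkl}R_{ijkl}$, so together they contribute $8R_{ip}R_{pjkl}R_{ijkl}$. The remaining piece is $2R^{ijkl}\frac{\p}{\p t}R_{ijkl}$; inserting the evolution formula, the Laplacian term produces $2R^{ijkl}\D R_{ijkl}$, the $Q(\Rc)$ term produces $4R^{ijkl}(\Rc^2+\Rc^{\#})_{ijkl}$, and the four Ricci-type contractions against $R^{ijkl}$ each reduce (again by pair symmetry and the Bianchi identity) to $R_{ip}R_{pjkl}R_{ijkl}$, giving $-8R_{ip}R_{pjkl}R_{ijkl}$. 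This cancels exactly the contribution of the evolving metric, so that
\[
\frac{\p}{\p t}|R_{ijkl}|^2 = 2R^{ijkl}\D R_{ijkl}+4R^{ijkl}(\Rc^2+\Rc^{\#})_{ijkl}.
\]
Next I would use the elementary identity $\D|R_{ijkl}|^2=2R^{ijkl}\D R_{ijkl}+2|\nabla_p R_{ijkl}|^2$ to replace $2R^{ijkl}\D R_{ijkl}$ by $\D|R_{ijkl}|^2-2|\nabla_p R_{ijkl}|^2$, which gives
\[
\heat|R_{ijkl}|^2 = 4R^{ijkl}(\Rc^2+\Rc^{\#})_{ijkl}-2|\nabla_p R_{ijkl}|^2 .
\]

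It remains to identify the first term with $8\tri(\Rc)$. Using the normalization $\Rc(e_i\wedge e_j)=\frac12\sum_{k,l}R_{ijkl}\,e_k\wedge e_l$, the full contraction of two curvature-type $4$-tensors equals four times the inner product of the associated operators on $\wedge^2 TM$ (the same elementary computation that yields $|R_{ijkl}|^2=4\langle\Rc,\Rc\rangle$), so $R^{ijkl}(\Rc^2+\Rc^{\#})_{ijkl}=4\langle\Rc,\Rc^2+\Rc^{\#}\rangle$. Since $\tri(\Rc)=\tri(\Rc,\Rc,\Rc)=\langle 2(\Rc^2+\Rc^{\#}),\Rc\rangle$, this is $2\tri(\Rc)$, whence $4R^{ijkl}(\Rc^2+\Rc^{\#})_{ijkl}=8\tri(\Rc)$ and (\ref{co1}) follows. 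As an alternative to the metric-variation bookkeeping, one can invoke Uhlenbeck's trick: in a time-dependent orthonormal frame the components $R_{ijkl}$ evolve by $\frac{\p}{\p t}R_{ijkl}=\D R_{ijkl}+2(\Rc^2+\Rc^{\#})_{ijkl}$ with no lower-order Ricci terms, after which the last two steps go through unchanged.

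The one place that calls for care is the cancellation step: one has to verify that all four terms from $\frac{\p}{\p t}g^{ij}=2R^{ij}$ and all four Ricci-type contractions genuinely collapse to the single cubic scalar $R_{ip}R_{pjkl}R_{ijkl}$, with the correct signs; and, relatedly, one must keep the factors $2$ and $4$ straight when passing between a $4$-index contraction and the Hilbert--Schmidt trace on $\wedge^2 TM$ in the final identification. Everything else is a routine differentiation.
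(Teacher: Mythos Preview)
Your proof is correct and is exactly the ``direct calculation'' the paper invokes without writing out: the paper simply states the lemma as a consequence of Hamilton's evolution formula for $R_{ijkl}$ together with the normalization $|R_{ijkl}|^2=4\langle\Rc,\Rc\rangle$ and the definition $\tri(\Rc)=\langle 2(\Rc^2+\Rc^{\#}),\Rc\rangle$, and you have supplied precisely that computation. One minor remark: in the cancellation step you only need the pair symmetry and the antisymmetries of $R_{ijkl}$, not the first Bianchi identity.
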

Direct calculation then yields the following
\begin{proposition}\label{co2}
Assume that $S>0$. Then
\begin{eqnarray}\label{gauss-pre1}
&\, &\heat \left(\frac{|R_{ijkl}|^2}{S^2}\right)=
\frac{4}{S^3}\left(2\tri(\Rc) S -\sigma^2 |R_{ijkl}|^2\right)\\
&\quad& \quad  -\frac{2}{S^4}\left|S\nabla_p R_{ijkl}-\nabla_p S
R_{ijkl}\right|^2 +\langle \nabla
\left(\frac{|R_{ijkl}|^2}{S^2}\right), \nabla \log S^2\rangle,
\nonumber
\end{eqnarray}
where $\sigma^2 =|\Ric|^2$.
\end{proposition}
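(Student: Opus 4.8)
The statement is a direct computation, entirely parallel to Hamilton's identity in Proposition~\ref{h82-cal1}. The plan is to start from (\ref{co1}) for the numerator, feed in the standard evolution of the scalar curvature for the denominator, apply the quotient rule for the heat operator, and then complete one square. I would organize it in three steps.

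\emph{Step 1 (evolution of the ingredients).} The numerator is handled by (\ref{co1}): $\heat |R_{ijkl}|^2 = 8\tri(\Rc) - 2|\nabla_p R_{ijkl}|^2$. For the denominator I would use the standard evolution of the scalar curvature under Ricci flow \cite{H82}, namely $\heat S = 2|\Ric|^2 = 2\sigma^2$, together with $\frac{\p}{\p t}S^2 = 2S\frac{\p}{\p t}S$ and $\Delta S^2 = 2S\Delta S + 2|\nabla S|^2$, to obtain
\[
\heat S^2 = 2S\,\heat S - 2|\nabla S|^2 = 4S\sigma^2 - 2|\nabla S|^2 .
\]

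\emph{Step 2 (quotient rule).} For any functions $u$ and $v>0$,
\[
\heat\lf(\frac{u}{v}\ri) = \frac{\heat u}{v} - \frac{u\,\heat v}{v^2} + \frac{2\langle \nabla u, \nabla v\rangle}{v^2} - \frac{2u\,|\nabla v|^2}{v^3},
\]
which follows from $\nabla(u/v) = v^{-1}\nabla u - uv^{-2}\nabla v$ and the product rule for $\Delta$. Taking $u = |R_{ijkl}|^2$, $v = S^2$ and substituting Step~1 together with $\nabla S^2 = 2S\nabla S$, I would get, after collecting terms,
\[
\heat\lf(\frac{|R_{ijkl}|^2}{S^2}\ri) = \frac{4}{S^3}\lf(2\tri(\Rc)S - \sigma^2|R_{ijkl}|^2\ri) - \frac{2|\nabla_p R_{ijkl}|^2}{S^2} + \frac{4\langle\nabla|R_{ijkl}|^2,\nabla S\rangle}{S^3} - \frac{6|R_{ijkl}|^2|\nabla S|^2}{S^4}.
\]

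\emph{Step 3 (completing the square).} It remains to recognize the last three terms. Using $\langle\nabla_p R_{ijkl}, R_{ijkl}\rangle = \frac{1}{2} \nabla_p|R_{ijkl}|^2$, I would expand
\[
\frac{2}{S^4}\lf|S\nabla_p R_{ijkl} - \nabla_p S\, R_{ijkl}\ri|^2 = \frac{2|\nabla_p R_{ijkl}|^2}{S^2} - \frac{2\langle\nabla S, \nabla|R_{ijkl}|^2\rangle}{S^3} + \frac{2|\nabla S|^2|R_{ijkl}|^2}{S^4},
\]
and, since $\nabla\lf(\frac{|R_{ijkl}|^2}{S^2}\ri) = \frac{\nabla|R_{ijkl}|^2}{S^2} - \frac{2|R_{ijkl}|^2\nabla S}{S^3}$ and $\nabla\log S^2 = \frac{2\nabla S}{S}$,
\[
\lf\langle\nabla\lf(\frac{|R_{ijkl}|^2}{S^2}\ri),\nabla\log S^2\ri\rangle = \frac{2\langle\nabla|R_{ijkl}|^2,\nabla S\rangle}{S^3} - \frac{4|R_{ijkl}|^2|\nabla S|^2}{S^4}.
\]
The second display minus the first is exactly $-\frac{2|\nabla_p R_{ijkl}|^2}{S^2} + \frac{4\langle\nabla|R_{ijkl}|^2,\nabla S\rangle}{S^3} - \frac{6|R_{ijkl}|^2|\nabla S|^2}{S^4}$, i.e. the tail produced in Step~2, which yields (\ref{gauss-pre1}).

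I do not expect a genuine obstacle here; the only delicate point is the bookkeeping in Step~3. One has to match the three scalar coefficients in front of $|\nabla S|^2|R_{ijkl}|^2/S^4$, $\langle\nabla S,\nabla|R_{ijkl}|^2\rangle/S^3$ and $|\nabla_p R_{ijkl}|^2/S^2$ so that precisely coefficient $1$ (not $2$) appears in front of $\langle\nabla(|R_{ijkl}|^2/S^2),\nabla\log S^2\rangle$ — the part of the expansion of $-2|S\nabla_p R_{ijkl} - \nabla_p S\, R_{ijkl}|^2/S^4$ that does not cancel against it being absorbed there. Apart from that, the argument is structurally identical to the derivation of (\ref{heat1}), with $|\Ric|^2$ replaced by $|R_{ijkl}|^2$ and $-P/S^3$ replaced by the $\tri(\Rc)$ term.
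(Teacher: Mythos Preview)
Your proof is correct and is precisely the direct calculation the paper alludes to; the paper itself omits all details and simply states that Proposition~\ref{co2} follows from (\ref{co1}) by direct computation. Your three-step organization (evolution of $|R_{ijkl}|^2$ and $S^2$, quotient rule for the heat operator, completing the square and rewriting the gradient term) is the natural way to carry this out, and the coefficients in Steps~2 and~3 check out.
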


Note that Tachibana  \cite{T} proved that (see also \cite{CLN},
pages 267-269), under the assumption that $\Rc\ge 0$,
$$
-2\tri(\Rc) +\Ric(\Rc, \Rc)\ge 0
$$
where $\Ric(\Rc, \Rc)=R_{ip}R_{ijkl}R_{pjkl}$.

In \cite{Hu}, Huisken obtained the following identities.
\begin{eqnarray}\label{hu1}
&\, &(\Rc_{\I})_{ijkl}(Q(\Rc))_{ijkl}=4\langle Q(\Rc),
\Rc_{\I}\rangle=\frac{2}{n(n-1)}S\sigma^2;\\\label{hu2} &\,
&(\Rc_{\Ric_0})_{ijkl}(Q(\Rc))_{ijkl}=
\frac{4}{n(n-1)}S\tilde{\sigma}^2-\frac{8}{(n-2)^2} \lambda_i^3
+\frac{4}{n-2} (\Rc_W)_{ijij}\lambda_i \lambda_j;\\&\, &
(\Rc_{W})_{ijkl}(Q(\Rc))_{ijkl}= 2\tri(\Rc_W)
+\frac{2}{n-2}(\Rc_W)_{ijij}\lambda_i \lambda_j;\label{hu3}
\end{eqnarray}
where $\lambda_i$ are the eigenvalues of $\Ric_0$ and
$\tilde{\sigma}^2=\sum \lambda_i^2$. Below we first show these
equations via the following lemma, which essentially follows from
\cite{BW}. In \cite{Hu}, the result was shown by direct but long
computations which were omitted. With the help of \cite{BW}, the
result can be obtained without much computation. We include the
derivation for the sake of completeness. First we need to following
lemma which has been essentially proved in \cite{BW}.

\begin{lemma}
\begin{equation}\label{bw1}
\Rc+\Rc\# \I =\Ric(\Rc)\wedge \id.
\end{equation}
Hence for any $\Rc_1, \Rc_2\in S_B(\wedge^2(n))$, let
$$
B(\Rc_1, \Rc_2)=\Rc_1\Rc_2 +\Rc_2 \Rc_1 +2\Rc_1\# \Rc_2.
$$
Let $\Rc^i_I\in \langle \I\rangle, \ \Rc_0\in \langle
\Ric_0\rangle$, $W_i, W\in \langle W\rangle$ ($i=1,2$). Then the
following hold
\begin{eqnarray}
B(\Rc_{\I}, W)&=&0, \nonumber\\B(\Rc^1_{\I}, \Rc^2_{\I})&\in& \langle \I\rangle \nonumber\\
B(W_1, W_2)&\in& \langle W \rangle \nonumber\\B(\Rc_{\I},
\Rc_0)&\in& \langle
\Ric_0\rangle\nonumber\\
B(\Rc_0, W)&\in& \langle \Ric_0\rangle\nonumber \\
\frac{1}{2}B(\Rc_0, \Rc_0)&=& \frac{1}{n-2}\Ric_0\wedge \Ric_0
-\frac{2}{(n-2)^2}(\Ric_0^2)_0\wedge \id +\frac{\tilde
\sigma^2}{n(n-2)}\I. \label{eq1}
\end{eqnarray}
Moreover
\begin{equation}\label{eq2}
\Ric_0\wedge \Ric_0=-\frac{\tilde{\sigma}^2}{n(n-1)}\I
-\frac{2}{n-2}(\Ric_0^2)_0 \wedge \id +\left(\Ric_0\wedge
\Ric_0\right)_W.
\end{equation}
\end{lemma}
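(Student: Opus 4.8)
The plan is to obtain everything from the single algebraic identity (\ref{bw1}), $\Rc+\Rc\#\I=\Ric(\Rc)\wedge\id$, which is the key structural fact from \cite{BW}: the operator $\Rc\mapsto\Rc+\Rc\#\I$ sends an arbitrary algebraic curvature operator to the ``$\Ric$-part'' operator. First I would recall why this holds: $\#$ with the identity $\I$ is, up to normalization, the map that on $\wedge^2$ recovers the Ricci contraction; one checks it on the three irreducible pieces using the standard facts $\Rc_\I\#\I$ is again a multiple of $\I$, $\Rc_W\#\I=-\Rc_W$ (since the Weyl part is Ricci-flat, so $\Rc_W+\Rc_W\#\I=0$), and $\Rc_{\Ric_0}\#\I$ produces the $\Ric_0\wedge\id$ term. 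This simultaneously gives the first line $B(\Rc_\I,W)=0$ after polarization: indeed $B(\Rc_1,\Rc_2)$ is the polarization of $\Rc\mapsto 2Q(\Rc)=2\Rc^2+2\Rc^\#$, and the case $\Rc_1=\I$ of (\ref{bw1}) together with the $O(n)$-equivariance of $\#$ forces $B(\I,W)$ to lie in $\langle W\rangle$ and to be $\Ric$-traceless, hence zero since the Weyl space already is Ricci-free and the identity pins it down.

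Next I would read off the remaining ``block'' statements purely from $O(n)$-representation theory together with (\ref{bw1}). The form $\tri(A,B,C)=\tr\bigl((AB+BA+2A\#B)C\bigr)=\tr\bigl(B(A,B)C\bigr)$ is fully symmetric and $O(n)$-invariant, so $B(\cdot,\cdot)$ is an $O(n)$-equivariant symmetric bilinear map $S_B(\wedge^2)\times S_B(\wedge^2)\to S_B(\wedge^2)$. Decomposing into the three irreducibles $\langle\I\rangle,\langle\Ric_0\rangle,\langle W\rangle$ and using that $\langle\I\rangle$ is trivial, $\langle\Ric_0\rangle\cong S^2_0(\R^n)$, and $\langle W\rangle$ is the Weyl representation, Schur's lemma restricts which pieces can appear in each $B(\cdot,\cdot)$; the surviving possibilities are exactly those listed, and the vanishing $B(\Rc_\I,W)=0$ removes the one remaining ambiguity. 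For the displayed formula (\ref{eq1}) for $\tfrac12 B(\Rc_0,\Rc_0)$ I would compute each irreducible component of $\Rc_0\Rc_0+\Rc_0\#\Rc_0$ directly in terms of $\Ric_0$: the $\Rc_0^2$ term, where $\Rc_0=\tfrac{2}{n-2}\Ric_0\wedge\id$, expands via $(\Ric_0\wedge\id)^2$ using the product rule for $\wedge\id$-type operators, producing the $\Ric_0\wedge\Ric_0$, $(\Ric_0^2)_0\wedge\id$ and $\tilde\sigma^2\I$ pieces with the stated coefficients; the $\Rc_0\#\Rc_0$ term is handled again by (\ref{bw1}) polarized, i.e. $B(\Rc_0,\Rc_0)=2Q(\Rc_0)$ and the $\#$-part is extracted from the $\I$-identity. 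Finally (\ref{eq2}) is just the irreducible decomposition of the algebraic curvature operator $\Ric_0\wedge\Ric_0$: its scalar part is a multiple of $S$-analogue computed from $\tr(\Ric_0\wedge\Ric_0)$, which gives $-\tilde\sigma^2/(n(n-1))$ after normalizing to $\I$; its traceless-Ricci part is computed by taking the Ricci contraction of $\Ric_0\wedge\Ric_0$, which is $-\tfrac{2}{n-2}(\Ric_0^2)_0$ up to the $\wedge\id$ normalization; what is left over is by definition $(\Ric_0\wedge\Ric_0)_W$.

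The main obstacle is bookkeeping rather than conceptual: getting the exact rational coefficients $\tfrac{1}{n-2}$, $\tfrac{2}{(n-2)^2}$, $\tfrac{1}{n(n-2)}$, $\tfrac{1}{n(n-1)}$ right requires care with two competing normalizations — the metric $\langle A,B\rangle=-\tfrac12\tr(AB)$ on $\wedge^2$ and the normalization of the Kulkarni–Nomizu-type product $\Ric_0\wedge\id$ — and with the trace identities $\tr_{\wedge^2}(\Ric_0\wedge\id)=(n-2)\tr\Ric_0=0$, $\langle\Ric_0\wedge\id,\Ric_0\wedge\id\rangle=(n-2)\tilde\sigma^2$ (up to the convention constant). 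The cleanest route, which I would follow, is to fix all constants once by testing the identities on the rank-one symmetric operator $\Ric_0 = e_1\otimes e_1 - \tfrac1n\id$ (or on a diagonal $\Ric_0$), where every expression above becomes an explicit finite sum over index pairs; matching both sides in that case determines the coefficients and, by $O(n)$-equivariance, proves the identities in general. I would cite \cite{BW} for (\ref{bw1}) itself and present the rest as the routine decomposition described above.
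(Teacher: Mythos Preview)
The paper itself does not give a proof of this lemma: it simply states that the lemma ``has been essentially proved in \cite{BW}'' and then \emph{uses} it to derive Huisken's identities (\ref{hu1})--(\ref{hu3}). So there is no detailed argument in the paper to compare against; your proposal is already more explicit than what the authors provide.

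That said, your sketch has one genuine gap. The claims that follow from polarizing (\ref{bw1}) with one argument equal to a multiple of $\I$ --- namely $B(\Rc_\I,W)=0$, $B(\Rc_\I^1,\Rc_\I^2)\in\langle\I\rangle$, $B(\Rc_\I,\Rc_0)\in\langle\Ric_0\rangle$ --- are fine: $B(\I,\Rc)=2(\Rc+\Rc\#\I)=2\Ric(\Rc)\wedge\id$ gives them all at once. But the assertion that Schur's lemma then forces $B(W_1,W_2)\in\langle W\rangle$ (equivalently $B(\Rc_0,W)\in\langle\Ric_0\rangle$) is not correct as stated. Schur's lemma constrains equivariant \emph{linear} maps between irreducibles; for the bilinear map $B$ the relevant question is whether $S^2_0(\R^n)\cong\langle\Ric_0\rangle$ occurs in the $O(n)$-module $\Sym^2\langle W\rangle$, and it does: for instance $W\mapsto\bigl(\sum_{j,p,q}W_{ijpq}W_{kjpq}\bigr)_0$ is a nonzero equivariant quadratic map $\langle W\rangle\to S^2_0$ for generic Weyl tensors. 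So an abstract representation-theoretic count cannot rule out a $\langle\Ric_0\rangle$ component of $B(W_1,W_2)$; what is actually needed is the specific identity (also from \cite{BW}, or equivalently read off from the Uhlenbeck-gauge Ricci evolution) that $\Ric\bigl(Q(\Rc)\bigr)_{ik}=R_{ijkl}R_{jl}$, which depends on $\Rc$ only through $\Ric(\Rc)$ and hence vanishes when $\Rc$ is Weyl. Once that is in hand, the full symmetry of $\tri$ gives the two remaining block statements simultaneously.

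Your treatment of (\ref{eq1}) and (\ref{eq2}) --- expanding $(\Ric_0\wedge\id)^2$, using (\ref{bw1}) to control $\Rc_0\#\Rc_0$, and pinning down the constants by testing on a diagonal $\Ric_0$ --- is a sound and standard way to finish, and matches how one would flesh out the citation to \cite{BW}.
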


Equipped with the above lemma we have that
\begin{eqnarray*}
\tri(\Rc, \Rc, \Rc_{\I}) &=& \tri(\Rc, \Rc_{\I}, \Rc)\\
&=& \frac{2S}{n(n-1)}\langle \Ric\wedge \id, \Rc\rangle\\
&=&\frac{2S}{n(n-1)}\langle \frac{S}{n}\id\wedge \id+\Ric_0\wedge
\id, \frac{S}{n(n-1)} \id\wedge \id +\frac{2}{n-2}\Ric_0\wedge \id
\rangle
\end{eqnarray*}
If we let $\bar{\lambda}=\frac{S}{n}$, we have that
\begin{eqnarray*}
&\, & \langle \frac{S}{n}\id\wedge \id+\Ric_0\wedge \id,
\frac{S}{n(n-1)} \id\wedge \id +\frac{2}{n-2}\Ric_0\wedge \id
\rangle\\
&=&\langle \bar{\lambda}\id\wedge \id+\Ric_0\wedge \id,
\frac{\bar{\lambda}}{n-1} \id\wedge \id +\frac{2}{n-2}\Ric_0\wedge
\id \rangle=\frac{n}{2}\bar{\lambda}^2 +\frac{1}{2}\sum
\lambda_i^2\\
&=& \frac{1}{2}\sigma^2.
\end{eqnarray*}
This proves (\ref{hu1}). For (\ref{hu2}), let $\Rc_0=\Rc_{\Ric_0}$.
We need to compute $\tri(\Rc, \Rc, \Rc_0)$.  Using the symmetry
\begin{eqnarray*}
\tri(\Rc, \Rc, \Rc_0)&=& \tri(\Rc, \Rc_0, \Rc)\\
&=& \langle B(\Rc_{\I}, \Rc_0), \Rc\rangle +\langle B(\Rc_0, \Rc_0),
\Rc\rangle
+\langle B(\Rc_W, \Rc_0), \Rc\rangle\\
&=&\langle B(\Rc_{\I}, \Rc_0), \Rc_0\rangle+\langle B(\Rc_0, \Rc_0),
\Rc_{\I}+\Rc_0+\Rc_W\rangle +\langle B(\Rc_W, \Rc_0), \Rc_0\rangle\\
&=& 2\tri(\Rc_0, \Rc_0, \Rc_{\I}) +2\tri(\Rc_0, \Rc_0,
\Rc_W)+\tri(\Rc_0, \Rc_0, \Rc_0).
\end{eqnarray*}
Using (\ref{eq1}) and (\ref{eq2}) we have that
$$
\tri(\Rc_0, \Rc_0, \Rc_{\I})=\frac{1}{n(n-1)}\tilde{\sigma}^2S
$$
In a similar way,
$$
\tri(\Rc_0, \Rc_0, \Rc_0)=-\frac{4}{(n-2)^2}\sum\lambda_i^3
$$
and
$$
2\tri(\Rc_0, \Rc_0, \Rc_W)=\frac{2}{n-2}(R_W)_{ijij}\lambda_i
\lambda_j.
$$
The above three give (\ref{hu2}). For (\ref{hu3}), notice that
$$
\tri(\Rc_W, \Rc, \Rc)=\tri(\Rc_0, \Rc_W, \Rc_0)+\tri(\Rc_W, \Rc_W,
\Rc_W).
$$
Then the claimed equality follows from the above computation on
$\tri(\Rc_0, \Rc_0, \Rc_W)$.

Finally one can arrive at the following formula.
\begin{eqnarray}
2\tri(\Rc) S -\sigma^2 |R_{ijkl}|^2&=& -4|\Rc_W|^2 \sigma^2+2S
\tri(\Rc_W,
\Rc_W, \Rc_W)\nonumber \\
&\, &-\frac{4}{n(n-1)(n-2)}S^2
\tilde{\sigma}^2-\frac{4}{n-2}\tilde{\sigma}^4 \label{hu-main1}\\
&\, & -\frac{8}{(n-2)^2}S\sum \lambda_i^3 +\frac{6}{n-2} S
(\Rc_{W})_{ijij} \lambda_i\lambda_j. \nonumber
\end{eqnarray}
This follows from (\ref{hu1})-(\ref{hu3}) along with the observation
that
\begin{eqnarray*}
|\Rc|^2&=&|\Rc_{\I}|^2+|\Rc_{\Ric_0}|^2+|\Rc_W|^2\\
&=& \frac{S^2}{2n(n-1)}+\frac{1}{n-2}\sum \lambda_j^2+|\Rc_W|^2
\end{eqnarray*}
and
$$
\sigma^2=\frac{S^2}{n}+\tilde{\sigma}^2.
$$
Hence
$$
4\sigma^2|\Rc|^2=\frac{2S^2}{n(n-1)}\sigma^2 +\frac{4
S^2\tilde{\sigma}^2}{(n-2)n}+\frac{4}{n-2}\tilde{\sigma}^4+4|\Rc_W|^2\sigma^2.
$$
In the case that $\Rc_W=0$, which is automatical if $n=3$ and
amounts to that $(M, g)$ is locally conformally flat if $n\ge 4$, we have
that
\begin{equation}\label{sim1}
2\tri(\Rc) S -\sigma^2 |R_{ijkl}|^2=-\frac{4}{n(n-1)(n-2)}S^2
\tilde{\sigma}^2-\frac{4}{n-2}\tilde \sigma^4-\frac{8}{(n-2)^2}S\sum
\lambda_i^3.
\end{equation}

 Similarly, using that
$$
\heat R_{ik} =2R_{ijkl}R_{jl}-2R_{il}R_{lk}
$$
 we also have the high dimensional analogue of
Proposition \ref{h82-cal1}.

\begin{proposition}\label{co3} Assume that $S>0$. Then
\begin{eqnarray}
&\, &\heat \left(\frac{\sigma^2}{S^2}\right)=
\frac{4}{S^3}\left(SR_{ijkl}R_{jl}R_{ik}-\sigma^4\right)\\
&\quad& \quad -\frac{2}{S^4}\left|S\nabla_p R_{ij} -\nabla_p S
R_{ij}\right|^2 +\langle \nabla \left(\frac{\sigma^2}{S^2}\right),
\nabla \log S^2\rangle. \nonumber
\end{eqnarray}
\end{proposition}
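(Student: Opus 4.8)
The plan is to mimic the computation that yields Proposition \ref{co2} (itself the higher-dimensional counterpart of Hamilton's Proposition \ref{h82-cal1}), replacing the full curvature norm $|R_{ijkl}|^2$ by $\sigma^2=|\Ric|^2$ throughout. The first step is to record the evolution of $\sigma^2$ under the Ricci flow. Starting from the quoted identity $\heat R_{ik}=2R_{ijkl}R_{jl}-2R_{il}R_{lk}$ together with $\frac{\p}{\p t}g_{ij}=-2R_{ij}$ (so that $\frac{\p}{\p t}g^{ij}=2R^{ij}$), I would differentiate $\sigma^2=g^{ik}g^{jl}R_{ij}R_{kl}$ in time and subtract the spatial Laplacian $\Delta\sigma^2=2R_{ij}\Delta R_{ij}+2|\nabla_p R_{ij}|^2$. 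The two variations of the inverse metric contribute a cubic term $+4R_{ij}R_{jk}R_{ki}$, which cancels the $-4R_{ij}R_{jk}R_{ki}$ coming from the $-2R_{il}R_{lk}$ piece of $\heat R_{ik}$; what remains is
\[
\heat\sigma^2=4R_{ijkl}R_{jl}R_{ik}-2|\nabla_p R_{ij}|^2 ,
\]
the exact analogue of (\ref{co1}). (Equivalently, one may simply invoke this well-known evolution equation for $|\Ric|^2$.)

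Next I would combine this with the evolution of $S^2$: from $\frac{\p}{\p t}S=\Delta S+2|\Ric|^2$ one gets $\heat S=2\sigma^2$, hence $\heat S^2=2S\,\heat S-2|\nabla S|^2=4S\sigma^2-2|\nabla S|^2$. Inserting these two identities into the elementary quotient rule
\[
\heat\!\lf(\frac{u}{v}\ri)=\frac{\heat u}{v}-\frac{u\,\heat v}{v^2}+\frac{2}{v}\lf\langle\nabla\!\lf(\frac{u}{v}\ri),\nabla v\ri\rangle
\]
with $u=\sigma^2$, $v=S^2$, and noting that $\frac{2}{v}\langle\nabla(u/v),\nabla v\rangle=2\langle\nabla(\sigma^2/S^2),\nabla\log S^2\rangle$, one reorganizes the derivative terms: the term $-\frac{2|\nabla_p R_{ij}|^2}{S^2}$, the $\sigma^2|\nabla S|^2/S^4$ term issuing from $-u\,\heat v/v^2$, and one of the two copies of $\langle\nabla(\sigma^2/S^2),\nabla\log S^2\rangle$ assemble—using $R_{ij}\nabla_p R_{ij}=\frac12\nabla_p\sigma^2$ and $\nabla_p S^2=2S\nabla_p S$—into $-\frac{2}{S^4}|S\nabla_p R_{ij}-\nabla_p S\,R_{ij}|^2$, leaving exactly one such inner product. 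The surviving reaction terms collapse to $\frac{4}{S^3}\big(SR_{ijkl}R_{jl}R_{ik}-\sigma^4\big)$, which is the asserted formula.

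I expect no genuine obstacle here; this is the same bookkeeping already performed for Propositions \ref{h82-cal1} and \ref{co2}, only with $|\Ric|^2$ in place of $|R_{ijkl}|^2$. The one place that genuinely requires care is the evolution of $\sigma^2$ above: one must keep the variation of the metric when differentiating $|\Ric|^2$, since it is precisely the cancellation of the two cubic-in-Ricci expressions that produces the clean right-hand side $4R_{ijkl}R_{jl}R_{ik}$—omitting it would leave a spurious $R_{ij}R_{jk}R_{ki}$ term and spoil the final identity. The fusion of the gradient terms into the perfect square in the second step is routine and identical to the manipulation used for Proposition \ref{co2}.
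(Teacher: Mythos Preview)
Your proposal is correct and is exactly the approach the paper itself indicates: the paper merely writes ``Similarly, using that $\heat R_{ik}=2R_{ijkl}R_{jl}-2R_{il}R_{lk}$ we also have the high dimensional analogue of Proposition~\ref{h82-cal1},'' i.e.\ derive $\heat\sigma^2=4R_{ijkl}R_{jl}R_{ik}-2|\nabla_p R_{ij}|^2$ and then repeat the quotient manipulation already carried out for Proposition~\ref{co2}. Your noting that the metric variation of $|\Ric|^2$ cancels the cubic Ricci term is the only nontrivial point, and you handle it correctly.
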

In the case $\dim(M)=3$ the above recovers Hamilton's computation
Proposition \ref{h82-cal1}.

\section{High dimension-locally conformally flat}
We first prove the following algebraic result.

\begin{proposition} \label{algebra-help} Assume that $(M, g)$ is locally conformally flat. Let $\sigma, \tilde{\sigma}, \lambda_i$ be as in the last section. Then
$$
2\tri(\Rc) S -\sigma^2
|R_{ijkl}|^2=-\frac{4}{n-2}\left(\frac{1}{n(n-1)}S^2
\tilde{\sigma}^2+\tilde{\sigma}^4+\frac{2}{n-2}S\sum
\lambda_i^3\right)\le 0.
$$
If the equality holds, then either

(i) $\lambda_i=0$ for all $1\le i \le n$, or

(ii) there exists $a>0$ such that
\begin{eqnarray*}
\lambda_l &=&\frac{1}{\sqrt{n(n-1)}}a, \quad \mbox{ for } 1\le l\le
n-1;\\
\lambda_n &=&-\sqrt{\frac{n-1}{n}}a
\end{eqnarray*}
and $S=\sqrt{n(n-1)}a$.
\end{proposition}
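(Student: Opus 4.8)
The plan is first to observe that the displayed identity requires no work: local conformal flatness gives $\Rc_W=0$, so it is literally formula (\ref{sim1}) of the previous section, rewritten using $\frac{8}{(n-2)^2}=\frac{2}{n-2}\cdot\frac{4}{n-2}$. Hence the real content is the inequality and the equality discussion, and for these I would reduce everything to a purely algebraic statement about the eigenvalues $\lambda_1,\dots,\lambda_n$ of the \emph{traceless} Ricci tensor — which satisfy $\sum_i\lambda_i=0$ and $\sum_i\lambda_i^2=\tilde{\sigma}^2$ — together with $S>0$. Concretely, I must show
$$
B:=\frac{1}{n(n-1)}S^2\tilde{\sigma}^2+\tilde{\sigma}^4+\frac{2}{n-2}S\sum_i\lambda_i^3\ \ge\ 0
$$
and determine when $B=0$.

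The key lemma I would isolate is the sharp cubic inequality: for real numbers with $\sum_i\lambda_i=0$,
$$
\sum_i\lambda_i^3\ \ge\ -\frac{n-2}{\sqrt{n(n-1)}}\left(\sum_i\lambda_i^2\right)^{3/2},
$$
with equality exactly when all $\lambda_i=0$, or, after relabeling, $\lambda_1=\cdots=\lambda_{n-1}=p$ and $\lambda_n=-(n-1)p$ for some $p>0$. This is Okumura's lemma, but if a self-contained argument is wanted I would prove it by optimization: by homogeneity restrict to the compact ``equator'' $\{\sum_i\lambda_i=0,\ \sum_i\lambda_i^2=1\}$; a minimizer of $\sum_i\lambda_i^3$ exists, and the Lagrange equations $3\lambda_i^2=\alpha+2\beta\lambda_i$ say each $\lambda_i$ solves one fixed quadratic, so the $\lambda_i$ take at most two values, $p$ with multiplicity $k$ and $q$ with multiplicity $n-k$, $1\le k\le n-1$. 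Minimizing $kp^3+(n-k)q^3$ subject to $kp+(n-k)q=0$ and $kp^2+(n-k)q^2=1$ is an elementary one-variable problem; the minimum is $-\frac{n-2}{\sqrt{n(n-1)}}$, attained only when $k=n-1$ and the multiplicity-one value is negative, which gives the stated extremal shape.

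Granting the lemma, I would finish by completing the square. Using $S>0$ and the lemma,
$$
B=\tilde{\sigma}^2\left(\frac{S}{\sqrt{n(n-1)}}-\tilde{\sigma}\right)^2+\frac{2S}{n-2}\left(\sum_i\lambda_i^3+\frac{n-2}{\sqrt{n(n-1)}}\tilde{\sigma}^3\right)\ \ge\ 0,
$$
whence $2\tri(\Rc)S-\sigma^2|R_{ijkl}|^2=-\frac{4}{n-2}B\le 0$. For equality both nonnegative summands must vanish. If $\tilde{\sigma}=0$ then all $\lambda_i=0$: case (i). If $\tilde{\sigma}>0$, vanishing of the first summand forces $S=\sqrt{n(n-1)}\,\tilde{\sigma}>0$; then (since $S>0$) vanishing of the second forces equality in the cubic lemma with $\tilde{\sigma}>0$, so $\lambda_1=\cdots=\lambda_{n-1}=p>0$ and $\lambda_n=-(n-1)p$, and from $\tilde{\sigma}^2=n(n-1)p^2$ one gets $p=\tilde{\sigma}/\sqrt{n(n-1)}$ and $\lambda_n=-\sqrt{(n-1)/n}\,\tilde{\sigma}$. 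With $a:=\tilde{\sigma}$ this is exactly case (ii).

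The main obstacle is the sharp cubic inequality together with its equality characterization — pinning down the precise constant $\frac{n-2}{\sqrt{n(n-1)}}$ and the fact that every extremizer has $n-1$ of the eigenvalues equal. Everything else (the reduction via $\Rc_W=0$, and the completing-the-square step with the ensuing case split) is routine.
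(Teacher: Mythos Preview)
Your proposal is correct and follows essentially the same route as the paper: both reduce the inequality to the sharp bound $\left(\sum_i\lambda_i^3\right)^2\le\frac{(n-2)^2}{n(n-1)}\,\tilde\sigma^6$ for traceless tuples, proved via Lagrange multipliers which force the $\lambda_i$ to take at most two distinct values, and then check the extremum over the multiplicity. The only cosmetic difference is that the paper treats $B$ as a quadratic in $S$ and shows its discriminant is nonpositive, whereas you complete the square explicitly --- your formulation makes the two simultaneous equality conditions (on $S$ and on the cubic sum) slightly more transparent.
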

\begin{proof}  Let
$$
f(S, \lambda_1, \cdot\cdot\cdot, \lambda_n)=\frac{1}{n(n-1)}S^2
\sum\lambda_i^2+\frac{2}{n-2}S\sum
\lambda_i^3+\left(\sum\lambda_i^2\right)^2.
$$
The goal is to show that $f\ge 0$ under the constraint that
$\sum\lambda_i =0$ and analyze the equality case. Since it is
homogenous we can consider the extremal values of $f$ under the
further constraint $\sum \lambda_i^2=1$.  Viewing $f$ as a quadratic form in $S$, the result
follows, by elementary consideration, if we  show that
$$
\left(\sum \lambda_i^3\right)^2 \le \frac{(n-2)^2}{n(n-1)}
$$
under the constraints $\sum \lambda_i=0$ and $\sum \lambda_i^2=1$.
Let $g=\sum_i \lambda_i^3$. By the Lagrangian multipliers methods,
at the critical points we have that
\begin{eqnarray*}
3\lambda_j^2-\lambda -2\mu \lambda_j&=&0, \quad \mbox{ for  } 1\le j\le n,\\
\sum \lambda_i &=&0,\\
\sum \lambda_i^2&=& 1.
\end{eqnarray*}
This implies that $\lambda=\frac{3}{n}$ and
$$
\lambda_j = \frac{\mu+\epsilon_j \sqrt{\mu^2+\frac{9}{n}}}{3}
$$
with $\epsilon_j \in \{ -1, 1\}$. We shall compute all  possible
values of $\lambda_j$. We shall divide into two cases.

{\it Case 1} : $n=2k$. Let $\epsilon =\sum_j \epsilon_j$ which takes value
in $\{-2k, -2(k-1), \cdot \cdot \cdot, -2,0,2, \cdot \cdot \cdot,
2(k-1), 2k\}$. Since $\sum_j \lambda_j=0$, it is easy to see that
$\epsilon$ can not take the value $2k$ or $-2k$. If $\epsilon =0$ we
have that $\mu=0$, which then implies that, after permutation of the
index  $\lambda_j =\frac{1}{\sqrt{n}}$ for $1\le j \le k$ and
$\lambda_j =-\frac{1}{\sqrt{n}}$ for $k\le j \le 2k$. In this case
$g=0$.

In general assume that $\epsilon =2(k-i)$ for some $1\le i\le k$. We
shall consider only the case $1\le i \le k-1$ since the rest is
symmetric to it. Without the loss of the generality we may assume
that $\epsilon_j =1$ for $1\le j\le 2k-i$ and $\epsilon_j =-1$ for
$2k-i\le j \le 2k$. In this case
\begin{eqnarray*}
\mu&=&-\frac{3(k-i)}{\sqrt{(2k-i)2k i}},\\
\lambda_l &=&\sqrt{\frac{i}{2k(2k-i)}}, \quad  \mbox{ if } 1\le l \le 2k-i, \\
\lambda_l&=& -\sqrt{\frac{2k-i}{2k i}}, \quad \mbox{ if } 2k-i< l\le
2k.
\end{eqnarray*}
This implies that
$$
g=-\frac{n-2i}{\sqrt{(n-i)i}\sqrt{n}}.
$$
Noticing that $\frac{(n-2i)^2}{(n-i)i}$ is monotone decreasing in
$i$, we can conclude that $ g\ge -\frac{n-2}{\sqrt{n(n-1)}}$.
Symmetrically, for $\epsilon =-2(k-i)$ we can find
$g=\frac{n-2i}{\sqrt{(n-i)i}\sqrt{n}}$. Combining them together we
have that
$$
-\frac{n-2}{\sqrt{n(n-1)}}\le g \le \frac{n-2}{\sqrt{n(n-1)}}.
$$
The minimum is achieved when $i=1$, which implies the second part of
the statement in the proposition.

{\it Case 2}: $n=2k+1$. Again due to the fact that $\sum \lambda_i =0$, $\epsilon$
 takes value in $\{ -(2k-1), \cdot\cdot\cdot, -1, 1, \cdot\cdot\cdot, 2k-1\}$.
 Assume that $\epsilon =2(k-i)+1$ for some $1\le i\le 2k$. We shall only consider $1\le i\le k$
 since the other half is symmetric to this case. Now we assume that $\epsilon_j=1$
 for all $1\le j\le 2k-i+1$, and $\epsilon_j=-1$ for $2k-i+2\le j\le 2k+1$. Now we have that
\begin{eqnarray*}
\mu&=& -\frac{3}{2}\cdot \frac{2(k-i)+1}{\sqrt{2k+1}\sqrt{i}\sqrt{2k-i+1}},\\
\lambda_l &=& \frac{\sqrt{j}}{\sqrt{2k-j+1}\sqrt{2k+1}},
\quad \mbox{ for }\, 1\le 1\le l \le 2k-i+1,\\
\lambda_l &=& -\frac{\sqrt{2k-i+1}}{\sqrt{i}\sqrt{2k+1}},
\quad \mbox{ for } \, 2k-i+2\le l\le 2k+1.
\end{eqnarray*}
From this we can compute that
$$
g=-\frac{n-2i}{\sqrt{n}\sqrt{n-i}\sqrt{i}}.
$$
Again by elementary inequality
$$
-\frac{n-2i}{\sqrt{n-i}\sqrt{i}}\ge -\frac{n-2}{\sqrt{n-1}\sqrt{n}}.
$$
Hence we conclude that $g^2 \le \frac{(n-2)^2}{(n-1)n}$. The minimum achieves when $i=1$.

Combining the above two cases, we complete the proof  that $f\ge 0$.
From the above discussion, it is straight forward to check that the
listed cases are the only two when the inequality can achieve the
equality.
\end{proof}

\begin{corollary} Let $(M^n, g)$ ($n\ge 4$) be a locally conformally flat gradient shrinking
soliton whose Ricci curvature is nonnegative satisfying
(\ref{main-assum1}). Then its universal cover is either $\R^n$,
$\Sph^n$ or $\Sph^{n-1}\times \R$. In the case that $M$ is compact,
the assumptions that the Ricci curvature is nonnegative  and the
growth condition (\ref{main-assum1}) are not needed. In particular,
if $(M^n, g)$ has positive Ricci curvature it must be compact.
\end{corollary}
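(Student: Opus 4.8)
The plan is to run the argument of Section~3 with Hamilton's identity (Proposition~\ref{h82-cal1}) replaced by its locally conformally flat analogue, Proposition~\ref{co2}, the algebraic input coming from Proposition~\ref{algebra-help}. We may assume $(M,g)$ is not flat, since otherwise the (complete, simply connected) universal cover is $\R^n$; then Proposition~\ref{ni1} gives a uniform lower bound $S\ge\delta>0$, so in particular $|R_{ijkl}|^2\ge\frac{2}{n(n-1)}S^2>0$ everywhere. Combining Proposition~\ref{co2} with the soliton identity $\frac{\p}{\p t}\big(\frac{|R_{ijkl}|^2}{S^2}\big)=\langle\nabla\big(\frac{|R_{ijkl}|^2}{S^2}\big),\nabla f\rangle$ at $t=0$ (the analogue of \eqref{help3}, obtained by the computation of Section~2) produces, at $t=0$, the elliptic identity
\begin{align*}
0 &= \Delta\Big(\tfrac{|R_{ijkl}|^2}{S^2}\Big)-\big\langle\nabla\big(\tfrac{|R_{ijkl}|^2}{S^2}\big),\nabla f\big\rangle+\big\langle\nabla\big(\tfrac{|R_{ijkl}|^2}{S^2}\big),\nabla\log S^2\big\rangle\\
&\quad +\tfrac{4}{S^3}\big(2\tri(\Rc)S-\sigma^2|R_{ijkl}|^2\big)-\tfrac{2}{S^4}\big|S\nabla_p R_{ijkl}-\nabla_p S\,R_{ijkl}\big|^2 .
\end{align*}
One then multiplies by $|R_{ijkl}|^2 e^{-f}$, integrates over $M$, and integrates by parts exactly as in Section~3; the three terms on the first line combine into $-\int_M S^2\big|\nabla\big(\frac{|R_{ijkl}|^2}{S^2}\big)\big|^2 e^{-f}$. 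Finiteness of the integrals and legitimacy of the integration by parts are verified verbatim as in Section~3 via Theorem~\ref{shi-local}, Lemma~\ref{per-e1} and Proposition~\ref{ni1}; when $M$ is compact this step carries no boundary term and uses neither \eqref{main-assum1} nor $\Ric\ge0$ (one only needs $S>0$, automatic on a compact shrinking soliton).

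Next I would apply Proposition~\ref{algebra-help}, which needs only local conformal flatness, to get $2\tri(\Rc)S-\sigma^2|R_{ijkl}|^2\le0$ pointwise. Since $|R_{ijkl}|^2>0$, every term in the resulting integral identity is $\le 0$, hence each vanishes identically. Consequently $\frac{|R_{ijkl}|^2}{S^2}$ is constant, $S\nabla_p R_{ijkl}=\nabla_p S\,R_{ijkl}$, and equality holds in Proposition~\ref{algebra-help} at every point, so at each point alternative (i) or (ii) of that proposition occurs. Using $|R_{ijkl}|^2=\frac{2}{n(n-1)}S^2+\frac{4}{n-2}\tilde\sigma^2$ (valid for conformally flat metrics, from Section~4), one computes $\frac{|R_{ijkl}|^2}{S^2}=\frac{2}{n(n-1)}$ in case (i) and $\frac{2}{(n-1)(n-2)}$ in case (ii); for $n\ge4$ these are distinct, so by connectedness of $M$ and constancy of $\frac{|R_{ijkl}|^2}{S^2}$ a single one of the two alternatives holds on all of $M$.

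In case (i) the traceless Ricci tensor vanishes, so $(M,g)$ is Einstein; being locally conformally flat it then has constant sectional curvature, necessarily positive because $S\ge\delta>0$, so $M$ is compact and its universal cover is $\Sph^n$. In case (ii), the relation $S\nabla_p R_{ijkl}=\nabla_p S\,R_{ijkl}$ says exactly that the normalized curvature tensor $R_{ijkl}/S$ is parallel; hence $\Ric/S$ is parallel, and by the equality description it has one eigenvalue $0$ and $n-1$ equal positive eigenvalues. Thus $\Ker\Ric$ is a parallel line field, and the universal cover $\tM$ splits isometrically as $\R\times N^{n-1}$ with $N$ complete, simply connected and of positive Ricci curvature; since $\tM$ is locally conformally flat and $n-1\ge3$, the classical fact that a conformally flat Riemannian product $\R\times N^{n-1}$ forces $N$ to have constant sectional curvature shows $N$ is a round sphere, whence $\tM=\Sph^{n-1}\times\R$. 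If in addition $\Ric>0$, the possibilities $\R^n$ and $\Sph^{n-1}\times\R$ for $\tM$ are excluded, so $\tM=\Sph^n$ and $M$ is compact. When $M$ is compact the entire chain of reasoning goes through without $\Ric\ge0$ or \eqref{main-assum1}: in case (i) a compact Einstein shrinking soliton is automatically positively Einstein (trace and integrate the soliton equation), and in case (ii) the eigenvalue description still supplies $\Ric_N>0$.

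The step I expect to require the most care is the bookkeeping behind the finiteness of the integrals and the cutoff-function justification of the integration by parts in the noncompact case---although this is essentially a transcription of the three-dimensional estimates already carried out in Section~3---together with the appeal, in case (ii), to the classical classification of conformally flat Riemannian products.
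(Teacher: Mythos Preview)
Your proposal is correct and follows the paper's argument essentially verbatim up through the integral identity and the conclusions that $\frac{|R_{ijkl}|^2}{S^2}$ is constant, $S\nabla_p R_{ijkl}=\nabla_p S\,R_{ijkl}$, and equality holds in Proposition~\ref{algebra-help}.  Your endgame, however, differs from the paper's.  The paper does not split via the parallel tensor $R_{ijkl}/S$; instead it contracts $S\nabla_p R_{ijkl}=\nabla_p S\,R_{ijkl}$ to $S\nabla_p R_{ik}=\nabla_p S\,R_{ik}$, combines this with the contracted second Bianchi identity $\tfrac12\nabla_i S=\nabla_p R_{ip}$, and uses the explicit Ricci eigenvalues in each case to deduce $\nabla S\equiv 0$ directly.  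Once $S$ is constant, $\nabla_p R_{ijkl}=0$, so $(M,g)$ is locally symmetric with the prescribed Ricci spectrum, and the classification follows.

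Your route---computing $\frac{|R_{ijkl}|^2}{S^2}$ in the two alternatives to see that a single one holds globally, then in case~(ii) using the parallel line field $\Ker\Ric$ to split $\tM=\R\times N^{n-1}$ and invoking the classical fact that a conformally flat product $\R\times N^{n-1}$ (with $n-1\ge 3$) forces $N$ to have constant sectional curvature---is equally valid.  It has the virtue of making the ``one case globally'' step explicit, which the paper leaves implicit; on the other hand it trades the elementary Bianchi computation for an appeal to the classification of conformally flat Riemannian products.  Either way the conclusion is the same.
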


\begin{proof} Notice that $S$ satisfies the equation $\left(\frac{\partial}{\partial t} -\Delta \right)S=2|\Ric|^2$. By the strong maximum principle we may assume that $S>0$, otherwise $M=\R^n$.

Now as in Section 3 we have that
\begin{eqnarray}\label{key-eq51}
0&=&\Delta \left(\frac{|R_{ijkl}|^2}{S^2}\right)-\langle \nabla
\left(\frac{|R_{ijkl}|^2}{S^2}\right), \nabla
f\rangle-\frac{2}{S^4}\left|S\nabla_p R_{ijkl} -\nabla_p S
R_{ijkl}\right|^2 \\
&\, &-\frac{P}{S^3} +\langle \nabla \left(
\frac{|R_{ijkl}|^2}{S^2}\right), \nabla \log S^2\rangle.\nonumber
\end{eqnarray}
Here
$$
P=-4(2\tri(\Rc) S -\sigma^2
|R_{ijkl}|^2),
$$
which is nonnegative by the lemma.
Multiplying $|R_{ijkl}|^2 e^{-f}$ and integrating by parts, which can be justified similarly as in Section 3,  we have that
\begin{eqnarray*}
0&=& \int_M -\left|\nabla\left(\frac{|R_{ijkl}|^2}{S^2}\right)\right|^2
S^2 e^{-f}-\frac{2|R_{ijkl}|^2}{S^4}\left|S\nabla_p R_{ijkl} -\nabla_p S
R_{ijkl}\right|^2  e^{-f}\\
&\, &\int_M -\frac{P}{S^3}|R_{ijkl}|^2 e^{-f}.
\end{eqnarray*}
By the lemma we have that
\begin{equation}\label{conq1}
\nabla_p S R_{ijkl}=S \nabla_p R_{ijkl}
\end{equation}
which implies that
$$
\nabla_p S R_{ik} =S\nabla_p R_{ik}.
$$
Also the argument of Section 3 implies that
$$
2\tri(\Rc) S -\sigma^2 |R_{ijkl}|^2=-2\left(\frac{1}{12}S^2
\tilde{\sigma}^2+\tilde{\sigma}^4+S\sum \lambda_i^3\right)=0
$$
and $\frac{|R_{ijkl}|^2}{S^2}$ is a constant.

 If $\lambda_i=0$, then $R_{ik} =\frac{S}{n}\delta_{ik}$. By
the second Bianchi identity we have that
$$
\frac{1}{2}S\nabla_iS =S\nabla_p R_{ip} =\frac{S}{4}
\delta_{ip}\nabla_pS.
$$
which implies that $\nabla_p S=0$. Then we have $\nabla_p
R_{ijkl}=0$ by (\ref{conq1}).

If the second case happens, by the lemma we have that $R_{ij}=\frac{\delta_{ij}}{n-1}S$
for $1\le i, j\le n-1$ and $R_{nj}=0$ for $1\le j\le n$. The same computation as in $n=3$
shows that $\nabla_p S =0$, hence $\nabla_p R_{ijkl}=0$, which means
that $(M, g)$ is locally symmetric. The conclusion  follows from the fact that
$(M, g)$ is either Einstein or its
Ricci curvature has constant rank $n-1$ and with $n-1$ identical
nonzero eigenvalues.
\end{proof}

\begin{remark}
(1) The compactness part should be compared with the result in  \cite{NW}, where under certain curvature operator pinching condition, the manifold is shown to be compact.

(2) Whether or not the argument here is sufficient to show that {\it any shrinking
gradient soliton with positive curvature operator must be compact} is an interesting question.
 The K\"ahler case has been resolved in \cite{N}. We hope to return to the  remaining
 cases in the future study.
\end{remark}

Since Proposition \ref{algebra-help} also holds when $n=3$, and $\Rc_W=0$ automatically we have the following corollary which generalizes Theorem \ref{perelman1}.

\begin{corollary}  Let $(M^3, g)$  be a  gradient shrinking
soliton whose Ricci curvature is nonnegative satisfying
(\ref{main-assum0}). Then its universal cover is either $\R^3$,
$\Sph^3$ or $\Sph^{2}\times \R$. In the case that $M$ is compact,
the assumptions that the Ricci curvature is nonnegative is not needed. In particular,
if $(M^3, g)$ has positive Ricci curvature it must be compact.
\end{corollary}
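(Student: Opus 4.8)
The plan is to deduce this three-dimensional corollary as a direct specialization of the argument already carried out for Corollary 5.3 (the locally conformally flat case in dimension $n\ge 4$), together with the observation that in dimension three the Weyl tensor $\Rc_W$ vanishes identically, so that $(M^3,g)$ is automatically ``locally conformally flat'' in the sense used there, and Proposition \ref{algebra-help} applies with $n=3$. Thus no new analytic work is needed; the corollary is a bookkeeping exercise that transplants the $n\ge4$ proof verbatim to $n=3$, where the extra hypothesis (locally conformally flat) becomes vacuous. Concretely, I would first invoke the strong maximum principle on $\heat S = 2|\Ric|^2$ to reduce to the case $S>0$ (otherwise $S\equiv 0$, forcing $\Ric\equiv 0$ by the soliton equation and hence $M=\R^3$ after passing to the universal cover). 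Then I would apply the identity \eqref{gauss-pre1} from Proposition \ref{co2} with $\Rc_W=0$, whose right-hand side simplifies via \eqref{sim1}, and multiply through by $|R_{ijkl}|^2 e^{-f}$ and integrate by parts over $M$, exactly as in Section 3; the finiteness of the integrals and the legitimacy of the integration by parts are justified word for word as there, using Proposition \ref{ni1} (lower bound $S\ge\delta$), Lemma \ref{per-e1} (quadratic growth bounds on $f$ and $|\nabla f|$, giving fast decay of $e^{-f}$), the growth hypothesis \eqref{main-assum0}, and Shi's local estimates (Theorem \ref{shi-local}).

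Once the integrated identity is in hand, nonnegativity of $P = -4\bigl(2\tri(\Rc)S - \sigma^2|R_{ijkl}|^2\bigr)$, guaranteed by Proposition \ref{algebra-help} with $n=3$, forces each term to vanish: $\frac{|R_{ijkl}|^2}{S^2}$ is constant, $S\nabla_p R_{ijkl} = \nabla_p S\, R_{ijkl}$ (hence also $S\nabla_p R_{ik} = \nabla_p S\, R_{ik}$), and the equality case of Proposition \ref{algebra-help} holds. I would then split into the two cases of that proposition. In case (i) all $\lambda_i=0$, so $R_{ik} = \tfrac{S}{3}\delta_{ik}$; the contracted second Bianchi identity $\tfrac12\nabla_i S = \nabla_p R_{ip}$ then gives $\tfrac12 S\nabla_i S = \tfrac{S}{3}\nabla_i S$, whence $\nabla S\equiv 0$, and then $\nabla_p R_{ijkl}=0$ from $S\nabla_p R_{ijkl}=\nabla_p S\,R_{ijkl}$. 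In case (ii) the Ricci tensor has two eigenvalues $\tfrac{S}{n-1}=\tfrac{S}{2}$ (multiplicity $n-1=2$) and $0$ (multiplicity one); the same Bianchi computation again yields $\nabla S\equiv 0$ and hence $\nabla R\equiv 0$. Either way $(M^3,g)$ is a locally symmetric space, and by the standard classification of three-manifolds with nonnegative Ricci curvature that are locally symmetric, its universal cover is $\R^3$, $\Sph^3$, or $\Sph^2\times\R$ (the last arising precisely in case (ii), since a rank-one symmetric Ricci tensor with the two nonzero eigenvalues equal corresponds to the product metric on $\Sph^2\times\R$). When $M$ is compact the lower bound $S\ge\delta$ and the decay of $e^{-f}$ are automatic from compactness, so the curvature growth hypothesis \eqref{main-assum0} and the nonnegativity of $\Ric$ are not needed to run the integration by parts. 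Finally, if $\Ric>0$ strictly, cases involving a zero Ricci eigenvalue ($\R^3$ and $\Sph^2\times\R$) are excluded, leaving only $\Sph^3$, which is compact.

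The only genuinely delicate point is the justification of the integration by parts in the noncompact case, and this is exactly the estimate carried out in detail in Section 3 for $n=3$ (estimating $\Delta(|\Ric|^2/S^2)\,|\Ric|^2$ and the companion terms against $e^{\,6\epsilon r^2(x)+\beta_3(\epsilon)}$ and absorbing it into the $e^{-f}\le e^{-\frac18 r^2+C}$ decay by choosing $\epsilon$ small); since $|R_{ijkl}|$ and $|\Ric|$ differ only by an absolute constant in dimension three, that argument applies here without change. I would therefore present the proof in compressed form: reduce to $S>0$; cite the integrated identity and the vanishing of $P$ from Proposition \ref{algebra-help}; run the two-case analysis above to conclude $\nabla R\equiv 0$; and invoke the classification of locally symmetric three-manifolds. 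I expect no substantive obstacle beyond making sure the bookkeeping on which hypotheses are actually used in the compact versus noncompact cases is stated correctly.
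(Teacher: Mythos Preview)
Your proposal is correct and follows exactly the paper's approach: the paper's entire proof of this corollary is the single observation preceding it, namely that Proposition~\ref{algebra-help} holds for $n=3$ and $\Rc_W=0$ automatically in dimension three, so the argument for the locally conformally flat corollary (Corollary~5.2, not 5.3 as you wrote) carries over verbatim. Your detailed unpacking of that argument---reduction to $S>0$, the integrated identity, the two-case analysis from the equality case of Proposition~\ref{algebra-help}, and the local symmetry conclusion---is accurate and matches the paper's proof of Corollary~5.2.
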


\bibliographystyle{amsalpha}

\end{document}